\documentclass[11pt]{article}
\usepackage[a4paper,margin=1in]{geometry}
\usepackage[T1]{fontenc}
\usepackage[utf8]{inputenc}
\usepackage{lmodern}
\usepackage{microtype}
\usepackage{amsmath,amsthm,amssymb,mathtools}
\usepackage{graphicx}
\usepackage{float}
\usepackage{enumitem}
\usepackage{xcolor}
\usepackage{hyperref}
\usepackage{dsfont}

\allowdisplaybreaks

\hypersetup{colorlinks=true,linkcolor=blue!55!black,citecolor=blue!55!black,urlcolor=blue!55!black}
\newtheorem{theorem}{Theorem}
\newtheorem{lemma}{Lemma}
\newtheorem{assumption}{Assumption}
\newcommand{\R}{\mathbb{R}}

\newcommand{\norm}[1]{\left\lVert #1\right\rVert}
\newcommand{\Fclass}{\mathcal{F}_{1,M}}
\newcommand{\D}{\mathfrak{D}}
\title{Optimal Two-Step Stepsize Schedule for Stochastic Gradient Methods}
\author{Luwei Bai\footnote{School of Computing and Augmented Intelligence, Arizona State University (luweibai@asu.edu).} \and Baoyu Zhou\footnote{School of Computing and Augmented Intelligence, Arizona State University (baoyu.zhou@asu.edu).}}
\date{}
\begin{document}
\maketitle
\begin{abstract}
Structured nonconstant large stepsizes can improve the convergence of gradient descent in the deterministic setting. However, in stochastic optimization, aggressive stepsizes can amplify oracle noise and hinder the convergence of stochastic gradient methods. We characterize the globally optimal two-step stepsize schedule for stochastic gradient methods applied to strongly convex and smooth functions, assuming access only to unbiased stochastic gradient estimates with finite support and bounded variance. The optimal schedule depends on the ratio of the initial optimality gap to the noise level and exhibits several distinct regimes.
As the influence of stochastic noise diminishes, the optimal two-step stepsizes become larger, reflecting a balance between the benefits of faster iterate convergence and the perturbations induced by stochastic noise.
\end{abstract}
\section{Introduction}
Gradient descent (GD) is a fundamental method for minimizing smooth strongly convex functions~\cite{beck2017first}.
For any $M>m>0$, the constant stepsize $2/(M+m)$ achieves the best worst-case convergence bound of GD over all constant stepsizes when minimizing $m$-strongly convex, $M$-smooth functions and is therefore referred to as the optimal constant stepsize~\cite{nesterov2004introductory}.
With suitable constant stepsizes, GD requires $\mathcal{O}(\kappa\log(1/\epsilon))$ iterations to reach an iterate whose distance from the optimal solution is at most $\epsilon$, where $\kappa=M/m$ is the condition number and $\epsilon>0$ is the target accuracy~\cite{nesterov2004introductory}. Nesterov's accelerated gradient method improves the dependence on the condition number to $\mathcal{O}(\sqrt{\kappa}\log(1/\epsilon))$ by adding momentum-based iterative updates~\cite{nesterov2004introductory}.
\par Recent work shows that GD can be accelerated by carefully designed nonconstant stepsize schedules that prioritize overall progress across multiple iterates rather than per-iteration descent. In the same strongly convex and smooth setting discussed above, the silver stepsize schedule attains an iteration complexity of $\mathcal{O}(\kappa^{\log_{1+\sqrt{2}}2}\log(1/\epsilon))$~\cite{altschuler2025acceleration}. Subsequent work has extended these ideas to smooth convex objectives~\cite{altschuler2025smooth}, arbitrary prescribed horizons~\cite{grimmer2025composing,zhang2026concatenation}, anytime schedules~\cite{zhang2025anytime}, and proximal methods~\cite{bok2025proximal}. Novel stepsize schedules have also been developed for minimax problems, where negative stepsizes can enable convergence of gradient descent--ascent methods~\cite{shugart2025negative}. Together, these results establish that per-iteration descent is not necessary for effective deterministic optimization algorithms.
\par Compared with deterministic GD, stochastic gradient (SG) methods fundamentally change the stepsize-design problem because each step must balance iterate convergence against perturbations from stochastic noise. The paper~\cite{vernimmen2025robustness} numerically compared constant, long-step, and accelerated schedules under relative gradient inexactness, showing that long-step schedules can be sensitive to gradient perturbations and that shortening the schedules can improve the robustness of stochastic algorithms. In a complementary theoretical study, ~\cite{vernimmen2025worst} analyzed GD with relatively inexact gradients and a constant stepsize and established tight convergence guarantees.
With the smoothness constant $M$ normalized to one, their analysis restricts the stepsize to $(0,2/(1+\delta))\subset(0,2)$,
where $\delta\in(0,1)$ denotes the relative inexactness level. Thus, their results do not cover stepsizes larger than the classical threshold $2/M$, which are central to the long-step schedules considered in this paper. More recently,~\cite{bai2025generalization} proposed a two-step stepsize schedule for SG that improves upon the optimal constant stepsize $2/(M+m)$, but did not characterize the optimal two-step stepsize schedule.
\par This paper determines the exact optimal two-step stepsize schedule under an unbiased, finite-support, bounded-variance stochastic gradient oracle. We characterize the best worst-case performance of SG after two consecutive iterations and derive a two-step stepsize schedule that attains this performance. The optimal schedule depends only on the problem parameters $(m,M)$ and the ratio $\gamma=R^2/\sigma^2$, where $R$ denotes the distance between the initial iterate and the optimal solution and $\sigma^2$ denotes the variance of stochastic gradient estimates. For fixed $(m,M)$, a small $\gamma$ corresponds to a noise-dominated regime, in which shorter stepsizes are preferred. In contrast, a large $\gamma$ corresponds to a distance-dominated regime, in which more aggressive nonconstant stepsizes can provide greater improvement.
\par Our contribution is threefold. First, we propose a two-step stepsize schedule for SG with a closed-form expression consisting of four regimes separated by explicit thresholds in $\gamma$. Second, we prove that the proposed schedule is optimal: among all two-step stepsize schedules for SG, it achieves the best worst-case convergence performance over the class of $m$-strongly convex, $M$-smooth functions.
Third, our proposed optimal stepsize schedule captures the full transition from the noise-dominated regime, where both stepsizes vanish together with $\gamma$, to the distance-dominated regime, where our stepsize schedule recovers the deterministic two-step silver stepsize schedule~\cite{altschuler2025acceleration}. In the latter regime, the second stepsize can exceed the classical stepsize threshold $2/M$ that guarantees per-iteration descent.

\section{Main Results}
In this section, we first introduce the problem setting and the assumptions on the stochastic gradient oracle considered in this paper. We then present our two-step stepsize schedule, which consists of four regimes determined by the ratio of the initial optimality gap to the stochastic noise level. Finally, we prove that the proposed schedule is optimal for SG applied to strongly convex and smooth functions.
\subsection{Problem Setting}
Without loss of generality, throughout the remainder of the paper, we normalize the strong convexity parameter by setting $m=1$ and consider $M>1$, rather than general values $M>m>0$. Specifically, we consider the unconstrained optimization problem
\begin{equation}\label{eq.prob}
\min_{x\in\mathbb{R}^d} \ f(x),
\end{equation}
where $f:\mathbb{R}^d\to\mathbb{R}$ is a $1$-strongly convex and $M$-smooth function, denoted by $f\in\mathcal{F}_{1,M}$.
Thus, for any $\{x,y\}\subset\R^d$,
\begin{align}
&f(y)\ge f(x)+\langle\nabla f(x),y-x\rangle+\frac12\norm{y-x}^2, \label{eq:strong-convexity}\\
\text{and} \ \ &\norm{\nabla f(x)-\nabla f(y)}\le M\norm{x-y}. \label{eq:smoothness}
\end{align}

Because we consider SG methods, we impose the following assumption on the stochastic gradient oracle throughout the remainder of the paper.
\begin{assumption}[Stochastic oracle]\label{ass:oracle}
For any query iterate $x\in\R^d$, the stochastic oracle returns $g(x,i)$, where $\mathcal{I}$ is a finite index set, $|\mathcal{I}|$ denotes its cardinality, and $i$ is sampled uniformly from $\mathcal{I}$. The oracle is unbiased:
\begin{align*}
\nabla f(x) = \frac{1}{|\mathcal{I}|}\sum_{i\in\mathcal{I}}g(x,i).
\end{align*}
The samples are generated independently across iterations. Moreover, for any $x\in\R^d$, the stochastic gradient satisfies the variance bound
\begin{equation}\label{eq:variance}
\frac{1}{|\mathcal{I}|}\sum_{i\in\mathcal{I}}\|g(x,i) - \nabla f(x)\|^2 \leq \sigma^2,
\end{equation}
where $\sigma>0$.
\end{assumption}
Let $x_\star\in\R^d$ denote the unique minimizer of $f\in\mathcal{F}_{1,M}$ and let $x_0\in\R^d$ be the initial iterate satisfying $\|x_0-x_\star\|\leq R$ for some $R\geq 0$. Given a two-step stepsize schedule $(\alpha,\beta)\in\R_{\geq 0}\times\R_{\geq 0}$, SG performs two consecutive updates:
\begin{align*}
x_1^{(i)}=x_0-\alpha g(x_0,i) \qquad
\text{and}\qquad x_2^{(i,j)}&=x_1^{(i)}-\beta g(x_1^{(i)},j),
\end{align*}
where the stochastic gradient estimates $g(x_0,i)$ and $g(x_1^{(i)},j)$ are generated by the stochastic oracle in Assumption~\ref{ass:oracle}, and $(i,j)\in\mathcal{I}\times\mathcal{I}$ are sampled independently. We define the worst-case terminal error as
\begin{align}
\D(\alpha,\beta):=\sup\;\frac{1}{|\mathcal{I}|^2}\sum_{(i,j)\in\mathcal{I}\times\mathcal{I}}\norm{x_2^{(i,j)}-x_\star}^2, \label{eq:worst-case-error}
\end{align}
where the supremum is taken over $(i)$ all dimensions $d\geq 2$, $(ii)$ all functions $f\in\Fclass$, $(iii)$ all initial iterates $x_0\in\R^d$ satisfying $\norm{x_0-x_\star}\le R$, and $(iv)$ all stochastic gradient estimates generated by oracles satisfying Assumption~\ref{ass:oracle}.

\subsection{Our Two-Step Stepsize Schedule}
With the problem setting and stochastic oracle now specified, we present our two-step stepsize schedule for SG applied to~\eqref{eq.prob}. For parameters $(M,R,\sigma)\in\R_{>1}\times\R_{\geq 0}\times\R_{>0}$, we set
\begin{align}
\gamma&:=\frac{R^2}{\sigma^2}, & \bar\alpha&:=\frac{2}{M+1}, & \eta&:=\sqrt{2M^2-2M+1}, \label{eq:basic-parameters} \\
\gamma_1&:=\frac{2}{M-1}, & \gamma_2&:=\frac{2(M^2+3)}{(M-1)^3}, &\text{and}\qquad  \hat{a}&:=\frac{M-1+\eta}{M}. \label{eq:thresholds}
\end{align}
For any $\zeta\geq 1$, we further define
\begin{align}
T(\zeta)&:=\frac{(M-1)^4\zeta^2}{(M\zeta+1)^2(\zeta+M)^2}, \label{eq:T}\\
U(\zeta)&:=\frac{(\zeta+1)^2\bigl((2M^2-2M+1)\zeta^2+2M\zeta+1\bigr)}{(M\zeta+1)^2(\zeta+M)^2}, \label{eq:U}\\
\text{and}\qquad\gamma_\star&:=\frac{U(\hat{a})-U(1)}{T(1)-T(\hat{a})}. \label{eq:gamma-star}
\end{align}
Since $\hat{a}>1$ (by~\eqref{eq:basic-parameters}--\eqref{eq:thresholds} and $M>1$), $T(\zeta) > 0$ for $\zeta\geq 1$ (by~\eqref{eq:T} and $M>1$), and
\begin{align}\label{eq:T_decreasing}
\nabla T(1) = 0 \ \ \text{and} \ \ \nabla T(\zeta)=-\frac{2M(M-1)^4\zeta(\zeta-1)(\zeta+1)}{(M\zeta+1)^3(\zeta+M)^3}<0 \ \ \text{for all} \ \ \zeta > 1,
\end{align}
the denominator in~\eqref{eq:gamma-star} is positive. We now present our proposed two-step stepsize schedule, which adapts to $\gamma$ (see~\eqref{eq:basic-parameters}):
\begin{align}
(\alpha^\star,\beta^\star)=
\begin{cases}
\left(\dfrac{\gamma}{1+\gamma},\dfrac{\gamma}{1+2\gamma}\right), &\text{if } 0\le\gamma\leq \gamma_1,\\
\left(\bar\alpha,\dfrac{\gamma(M-1)^2+4}{\gamma(M-1)^2+4+(M+1)^2}\right), &\text{if } \gamma_1\le\gamma\leq\gamma_2,\\
(\bar\alpha,\bar\alpha), &\text{if } \gamma_2\le\gamma<\gamma_\star,\\
\left(\dfrac{2}{1+\eta},\dfrac{2}{1+2M-\eta}\right), &\text{if } \gamma\geq\gamma_\star,
\end{cases}, \label{eq:optimal-pair}
\end{align}
where the thresholds $\{\gamma_1,\gamma_2,\gamma_{\star}\}$ defined in~\eqref{eq:thresholds} and~\eqref{eq:gamma-star} satisfy $0 < \gamma_1 < \gamma_2 < \gamma_{\star}$ (see Lemma~\ref{lem:thresholds}). Thus,~\eqref{eq:optimal-pair} defines four regimes that transition gradually from a noise-dominated setting to a distance-dominated setting. In the first two regimes in~\eqref{eq:optimal-pair}, the stepsizes decrease as $\gamma$ decreases, corresponding to increasingly significant stochastic noise relative to the initial distance. In the third regime ($\gamma_2\leq \gamma < \gamma_{\star}$), the optimal constant stepsize $\bar\alpha$ (see~\eqref{eq:basic-parameters}) is used for both iterations. In the final regime ($\gamma\geq \gamma_{\star}$), the schedule becomes a long two-step stepsize schedule and recovers the deterministic silver stepsize schedule developed in~\cite{altschuler2025acceleration}.

\subsection{Theoretical Results and Analyses}

In this section, we prove the optimality of the schedule proposed in~\eqref{eq:optimal-pair} among all two-step stepsize schedules for SG applied to~\eqref{eq.prob}. We first state the main result in Theorem~\ref{thm:main}. To prove this result, we derive matching upper and lower bounds on the worst-case terminal error defined in~\eqref{eq:worst-case-error} in Lemma~\ref{lem:upper} and Lemmas~\ref{lem:hard-lower}--\ref{lem:minimization}, respectively. We then combine these bounds to establish Theorem~\ref{thm:main}.
\begin{theorem}[Optimal two-step stepsize schedule]\label{thm:main}
For parameters $(M,R,\sigma)\in\R_{>1}\times\R_{\geq 0}\times\R_{>0}$ and stepsizes $(\alpha,\beta)\in\R_{\geq 0} \times \R_{\geq 0}$, let $\D(\alpha,\beta)$ denote the worst-case terminal error defined in~\eqref{eq:worst-case-error}. Using the notation in~\eqref{eq:basic-parameters}--\eqref{eq:gamma-star}, for every $\gamma\geq 0$ with $\gamma\ne\gamma_\star$, the stepsize schedule defined in~\eqref{eq:optimal-pair} is the unique global minimizer of $\D(\alpha,\beta)$ over $\R_{\geq 0}\times\R_{\geq 0}$. When $\gamma=\gamma_\star$, the set of global minimizers is exactly
\begin{align}
\operatorname*{argmin}_{(\alpha,\beta)\in\R_{\geq 0}\times\R_{\geq 0}}\D(\alpha,\beta)=\left\{(\bar\alpha,\bar\alpha),\left(\frac{2}{1+\eta},\frac{2}{1+2M-\eta}\right)\right\}. \label{eq:optimal-pair-transition}
\end{align}
For every $\gamma\geq 0$, the optimal worst-case terminal error is
\begin{align}
\min_{(\alpha,\beta)\in\R_{\geq 0}\times\R_{\geq 0}}\D(\alpha,\beta) &= \D(\alpha^\star,\beta^\star) = \sigma^2V^\star(\gamma), \label{eq:optimal-minimum}\\
\text{where}\quad V^\star(\gamma)&:=
\begin{cases}
\dfrac{\gamma}{1+2\gamma}, &\text{if } 0\le\gamma\le\gamma_1,\\
\dfrac{\gamma(M-1)^2+4}{\gamma(M-1)^2+4+(M+1)^2}, &\text{if } \gamma_1\le\gamma\le\gamma_2,\\
\gamma T(1)+U(1), &\text{if } \gamma_2\le\gamma\le\gamma_\star,\\
\gamma T(\hat{a})+U(\hat{a}), &\text{if } \gamma\ge\gamma_\star.
\end{cases}. \label{eq:optimal-value}
\end{align}
The adjacent expressions in~\eqref{eq:optimal-value} agree at the boundary thresholds $\gamma_1$, $\gamma_2$, and $\gamma_{\star}$; see Lemma~\ref{lem:thresholds}.
\end{theorem}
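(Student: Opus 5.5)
The plan is to compute $\D(\alpha,\beta)$ exactly as an explicit function of $(\alpha,\beta,\gamma)$ by matching an upper bound with a lower bound, and then to minimize that function over $\R_{\geq 0}\times\R_{\geq 0}$.

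\textbf{Upper bound.} Write $e_0=x_0-x_\star$. Decompose $g(x,i)=\nabla f(x)+\xi_i(x)$, where the noise $\xi_i$ has zero mean and $\frac{1}{|\mathcal{I}|}\sum_i\norm{\xi_i(x)}^2\le\sigma^2$.

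Averaging over $j$ gives
\[
\frac{1}{|\mathcal{I}|}\sum_j\norm{x_2^{(i,j)}-x_\star}^2\le\norm{x_1^{(i)}-x_\star-\beta\nabla f(x_1^{(i)})}^2+\beta^2\sigma^2 .
\]

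For the first-step quantities, a single scalar bound is not enough when $\beta>2/(M+1)$. I would therefore use a PEP-style argument: write the interpolation inequalities for $\mathcal{F}_{1,M}$ among the points $x_\star$, $x_0$ and $x_1^{(i)}$, and combine them with nonnegative multipliers. The aim is an estimate of the form
\[
\D(\alpha,\beta)\le \sigma^2\bigl(\gamma\,\Phi(\alpha,\beta)+\Psi(\alpha,\beta)\bigr)\quad\text{(or a max of such affine-in-}\gamma\text{ pieces).}
\]
Here $\Phi$ is the deterministic two-step contraction factor $\max_{\lambda\in[1,M]}(1-\alpha\lambda)^2(1-\beta\lambda)^2$, or its PEP analogue when the stepsizes are long. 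The term $\Psi$ collects the noise injected at step one, propagated through the second step, plus $\beta^2$.

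\textbf{Lower bound.} I would construct explicit hard instances:
\begin{itemize}[leftmargin=*]
\item quadratics $f(x)=\tfrac12\lambda_1x_1^2+\tfrac12\lambda_2x_2^2$ with $\lambda_1,\lambda_2\in[1,M]$;
\item two-point noise $\pm\sigma u$ placed along a direction of curvature $\lambda'$;
\item for long steps, piecewise-quadratic (Huber-type) functions in $\mathcal{F}_{1,M}$, so that the noise seen at step two can act with a curvature different from the one acting on $x_0$.
\end{itemize}
For a given $(\alpha,\beta)$, optimizing over these instances should reproduce the upper bound. I expect the one-dimensional reduction to be parametrized by $\zeta\ge1$, with worst value $\gamma T(\zeta)+U(\zeta)$ on the silver-type family. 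At this stage I would show that, for each $(\alpha,\beta)$, $\D(\alpha,\beta)/\sigma^2$ equals a supremum over finitely many explicit instance families of functions affine in $\gamma$.

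\textbf{Minimization.} Next I minimize this exact expression over $(\alpha,\beta)$, casework by regime.
\begin{itemize}[leftmargin=*]
\item For small $\gamma$, the binding family is $\lambda=1$ with noise at curvature $1$. This gives $\gamma(1-\alpha)^2(1-\beta)^2+\alpha^2(1-\beta)^2+\beta^2$. Minimizing over $\alpha$ and then $\beta$ yields $\alpha=\gamma/(1+\gamma)$, $\beta=\gamma/(1+2\gamma)$, and value $\gamma/(1+2\gamma)$.
\item This regime is valid as long as the $\lambda=M$ family does not bind, i.e.\ $\alpha\le\bar\alpha$, which is exactly $\gamma\le\gamma_1$.
\item On $[\gamma_1,\gamma_2]$, $\alpha$ is pinned at $\bar\alpha$ by equalizing the $\lambda=1$ and $\lambda=M$ families. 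Minimizing the resulting one-variable function of $\beta$ gives the stated $\beta$; this regime ends when $\beta$ reaches $\bar\alpha$, i.e.\ at $\gamma_2$.
\item For $\gamma\ge\gamma_2$ there are two local candidates: the constant pair, with value $\gamma T(1)+U(1)$, and the silver pair, parametrized by $\zeta=\hat a$, with value $\gamma T(\hat a)+U(\hat a)$.
\end{itemize}
Both values are affine in $\gamma$, and since $T(1)>T(\hat a)$ by \eqref{eq:T_decreasing}, they cross exactly at $\gamma_\star$. This gives the switch and the two-point argmin \eqref{eq:optimal-pair-transition}. Uniqueness follows from strict convexity of each one-dimensional reduction and strict inequalities away from the binding families. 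Continuity of $V^\star$ at the thresholds is Lemma~\ref{lem:thresholds}.

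\textbf{Main obstacle.} The hardest step is the upper bound for long stepsizes ($\beta>2/(M+1)$). There the noise from step one interacts with nonquadratic curvature, so one must find the right multipliers, parametrized by $\zeta$, and prove that the family $\gamma T(\zeta)+U(\zeta)$ really is the worst case. I would first solve the small PEP numerically to guess the multipliers, then verify the resulting certificate symbolically by a sum-of-squares identity.
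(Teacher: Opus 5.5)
Your plan has genuine gaps, and the most serious one is in the lower bound. The instances you list cannot reach $\sigma^2V^\star(\gamma)$. For quadratic $f$ and any admissible oracle, the terminal error is at most $R^2\max_{\lambda\in[1,M]}(1-\alpha\lambda)^2(1-\beta\lambda)^2+\sigma^2\alpha^2\max\{(1-\beta)^2,(1-M\beta)^2\}+\sigma^2\beta^2$, and this is already too small in Regime~II. At $\gamma=\gamma_1$, the pair $(\frac{\gamma}{1+\gamma},\frac{\gamma}{1+2\gamma})=(\bar\alpha,\frac{2}{M+3})$ has $\max_{\lambda\in[1,M]}|(1-\alpha\lambda)(1-\beta\lambda)|$ attained only at $\lambda=1$. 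For instance, the $\lambda=M$ value is $\frac{M-1}{M+1}\cdot\frac{|M-3|}{M+3}<\frac{M-1}{M+3}$. Hence, for $\gamma$ slightly above $\gamma_1$, the quadratic bound at $(\frac{\gamma}{1+\gamma},\frac{\gamma}{1+2\gamma})$, where now $\alpha>\bar\alpha$, equals $\gamma/(1+2\gamma)$. This lies strictly below $V^\star(\gamma)$, since the difference is a positive multiple of $(\gamma(M-1)-2)^2$.

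So a quadratic ``$\lambda=M$ family'' does not bind at $\alpha=\bar\alpha$. What pins $\alpha$ there is a mixed-curvature instance such as $\phi_3$ (curvature $M$ for $z\ge0$, curvature $1$ for $z\le0$). It overshoots at curvature $M$ and then contracts at curvature $1$, giving the factor $|(M\alpha-1)(1-\beta)|$.

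Likewise, for large $\gamma$, quadratics favor the two-step Chebyshev pair, whose worst quadratic factor $(M-1)^2/(M^2+6M+1)$ is below $\sqrt{T(\hat a)}$. The silver pair can therefore only be certified with an instance like $\phi_{4,\alpha}$. Its kink sits at $\frac{1-\alpha}{1+\alpha(M-1)}R$, exactly where the first iterate lands, and it produces $\frac{|(1-\alpha)(M\beta-1)|}{1+\alpha(M-1)}$. This factor is what caps $\chi_\beta=\frac{M\beta-1}{1-\beta}$ at $\hat a$. So the Huber-type functions are needed on the \emph{distance} coordinate, and for short steps too. Noise-curvature mismatch, the role you gave them, is handled simply by an orthogonal quadratic coordinate with curvature $\lambda\in\{1,M\}$.

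The overall architecture also needs fixing. You propose to compute $\D(\alpha,\beta)$ exactly at every pair, claiming it equals a supremum over finitely many explicit families. Nothing supports this for general long steps, and it is unnecessary. What closes the argument is a sandwich:
\begin{itemize}
\item a lower bound $\D\ge\sigma^2\widehat L_\gamma$ valid on all of $\R_{\ge0}\times\R_{\ge0}$;
\item an exact global minimization of $\widehat L_\gamma$;
\item upper-bound certificates only at the candidates. On the square $[0,\bar\alpha]^2$ this comes from the $(1-h)$-Lipschitz gradient map. On the curve $\bigl(\frac{\zeta+1}{M\zeta+1},\frac{\zeta+1}{\zeta+M}\bigr)$, $\zeta\in[1,\hat a]$, it comes from explicit multipliers.
\end{itemize}
Uniqueness then follows because any minimizer of $\D$ must also minimize $\widehat L_\gamma$. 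Your appeal to strict convexity fails: $h_\gamma(\zeta)=\gamma T(\zeta)+U(\zeta)$ is not convex on $[1,\hat a]$, since at $\gamma_\star$ it takes equal values at both endpoints and is strictly larger in between.

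Finally, the claim that for $\gamma\ge\gamma_2$ there are exactly two candidates is the heart of the proof. It does not follow from affinity in $\gamma$; you must locate the minimum of the lower bound over $\beta\ge\bar\alpha$. The paper does this as follows.
\begin{itemize}
\item It passes to $\chi_\alpha=\frac{M\alpha-1}{1-\alpha}$, $\chi_\beta=\frac{M\beta-1}{1-\beta}$ and splits the region by which factor is active.
\item It shows that for $\gamma\ge\gamma_1$ the minimum lies on $\chi_\alpha\chi_\beta=1$, $\chi_\beta\in[1,\hat a]$. This uses $\gamma_\star(M-1)>\hat a(\hat a+1)$ and affine interpolation in $\gamma$ between $\gamma_1$ and $\gamma_\star$.
\item It proves, via a polynomial sign certificate, that $h_{\gamma_\star}(\zeta)>h_{\gamma_\star}(1)$ on $(1,\hat a)$.
\end{itemize}
You also still need to show that in Regime~I the square's candidate beats every pair with $\beta>\bar\alpha$, and that any $\alpha>1$ or $\beta>1$ can be truncated to $1$ with strict decrease.
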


The following figures illustrate the thresholds $\{\gamma_1,\gamma_2,\gamma_{\star}\}$, the optimal stepsize schedule in~\eqref{eq:optimal-pair}, and the relative worst-case terminal error $\D(\alpha^\star,\beta^\star)/\D(\bar\alpha,\bar\alpha)$, with the constant stepsize pair $(\bar\alpha,\bar\alpha)$ serving as the benchmark.
\begin{figure}[ht]
\centering
\includegraphics[width=\textwidth]{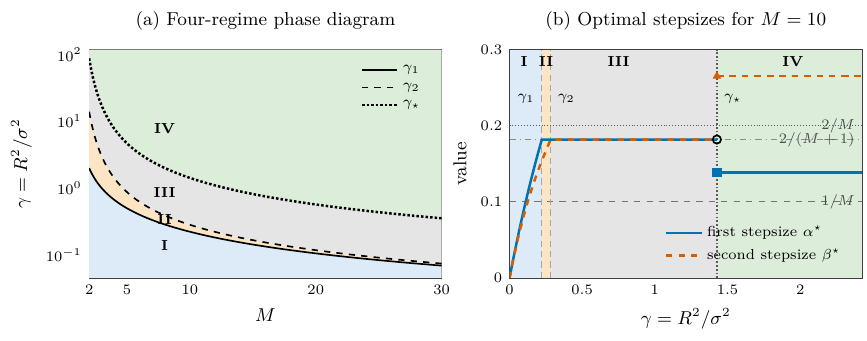}
\caption{Optimal two-step stepsize schedule structure. Subfigure~\textup{(a)} shows the four regimes in~\eqref{eq:optimal-pair}. Subfigure~\textup{(b)} shows the values of the first stepsize $\alpha^\star$ and the second stepsize $\beta^\star$ across all regimes when $M=10$.}
\label{fig:schedule-structure}
\end{figure}
\begin{figure}[ht]
\centering
\includegraphics[width=\textwidth]{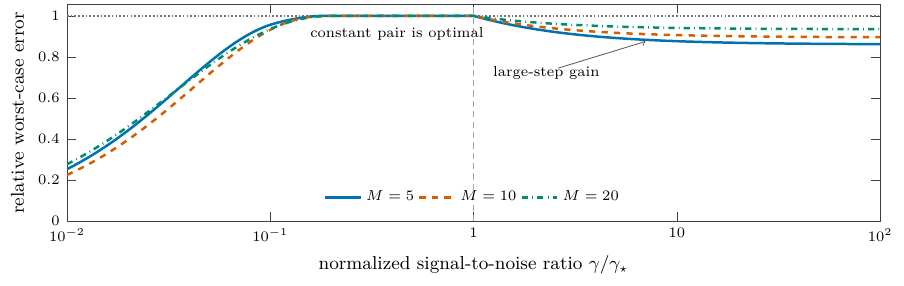}
\caption{Relative worst-case error achieved by the stepsize pair $(\alpha^\star,\beta^\star)$ in~\eqref{eq:optimal-pair}, with the error attained by the constant stepsize pair $(\bar\alpha,\bar\alpha)$ serving as the benchmark; that is, $\D(\alpha^\star,\beta^\star)/\D(\bar\alpha,\bar\alpha)$.}
\label{fig:optimal-value-gain}
\end{figure}
\par In particular, Figure~\ref{fig:schedule-structure}(a) shows how the three thresholds partition the $(M,\gamma)$-space into the four regimes of~\eqref{eq:optimal-pair}, while Figure~\ref{fig:schedule-structure}(b) illustrates the corresponding two-step stepsizes for $M=10$. These regimes describe how the optimal two-step stepsize schedule $(\alpha^\star,\beta^\star)$ changes as the ratio $\gamma = R^2/\sigma^2$ increases. In Regime~I ($0\leq \gamma\leq \gamma_1$), $R$ is small relative to the noise level $\sigma$, so both stepsizes are small to limit the adverse effect of noise perturbations. Regime~II ($\gamma_1\leq \gamma\leq \gamma_2$) is a transition: the first stepsize $\alpha^\star$ remains constant at $\bar\alpha=2/(M+1)$, while the second stepsize $\beta^\star$ continues to increase with $\gamma$ as the relative influence of stochastic noise decreases. In Regime~III ($\gamma_2\leq \gamma\leq \gamma_\star$), the constant stepsize pair $(\bar\alpha,\bar\alpha)$ is optimal, representing an intermediate balance between iterate convergence and noise perturbations. In Regime~IV, when $\gamma$ exceeds $\gamma_\star$, the initial-distance term dominates the noise-level term, and our proposed optimal stepsize pair $(\alpha^\star,\beta^\star)$ switches to the deterministic two-step silver stepsize schedule, whose second stepsize $\beta^\star$ may take a long step that no longer guarantees per-iteration descent~\cite{altschuler2025acceleration}.
\par Figure~\ref{fig:optimal-value-gain} compares the worst-case terminal error achieved by the optimal two-step stepsize schedule $(\alpha^\star,\beta^\star)$ with the error attained by the constant stepsize pair $(\bar\alpha,\bar\alpha)$. The ratio $\D(\alpha^\star,\beta^\star)/\D(\bar\alpha,\bar\alpha)$ equals one throughout Regime~III ($\gamma_2\leq\gamma\leq \gamma_\star$), where $(\bar\alpha,\bar\alpha)$ is optimal, and is smaller than one in all other regimes. The horizontal axis in Figure~\ref{fig:optimal-value-gain} is normalized by $\gamma_\star$, which is fixed for a given $M$. The axis could instead be normalized by $\gamma_1$ or $\gamma_2$; the choice affects only the visualization.
\par To prove Theorem~\ref{thm:main}, we first derive an upper bounding function for $\D(\alpha,\beta)$ over a subset of $\R_{\geq 0}\times\R_{\geq 0}$ that contains our proposed stepsize schedule $(\alpha^\star,\beta^\star)$ in~\eqref{eq:optimal-pair}. We then construct explicit functions and running scenarios of SG that yield a lower bounding function for $\D(\alpha,\beta)$ over $\R_{\geq 0}\times\R_{\geq 0}$. The value of the upper bounding function at $(\alpha^\star,\beta^\star)$ equals the global minimum of the lower bounding function, establishing the global optimality of our proposed stepsize schedule in~\eqref{eq:optimal-pair} and the exact optimal values in~\eqref{eq:optimal-minimum}--\eqref{eq:optimal-value}.
\subsubsection*{Upper bounding function}
To derive a meaningful upper bounding function, we invoke~\cite[Theorem~4]{taylor2017smooth} to express inequalities~\eqref{eq:strong-convexity}--\eqref{eq:smoothness}, equivalently the condition $f\in\Fclass$, as
\begin{align}
&\mathcal{Q}(u,v) \geq 0 \qquad \text{for any }u,v\in\R^d, \text{ where} \label{eq:interpolation} \\
&\mathcal{Q}(u,v):={}f(u)-f(v)+\frac{\langle M\nabla f(v)-\nabla f(u),v-u\rangle}{M-1} -\frac{\norm{\nabla f(u)-\nabla f(v)}^2}{2(M-1)}-\frac{M\norm{u-v}^2}{2(M-1)}. \notag
\end{align}
The inequality $\mathcal{Q}(u,v) \geq 0$ plays a key role in deriving the upper bounding function in the next lemma. The lemma has two parts: Part~\textup{(i)} considers $(\alpha,\beta)\in[0,\bar\alpha]\times[0,\bar\alpha]$, covering Regimes~I--III in~\eqref{eq:optimal-pair}; Part~\textup{(ii)} provides a certificate for Regimes~III--IV, and this certificate is sharper in Regime~III than the one in Part~\textup{(i)}.
\begin{lemma}[Upper bounding function]\label{lem:upper}
Consider parameters $(M,R,\sigma)\in\R_{>1}\times\R_{\geq 0}\times\R_{>0}$ and use the notation in~\eqref{eq:worst-case-error}--\eqref{eq:optimal-pair}.
Under Assumption~\ref{ass:oracle}, the following results hold:
\begin{enumerate}[label=\textup{(\roman*)},leftmargin=2em]
\item if $0\le\alpha\le\bar\alpha$ and $0\le\beta\le\bar\alpha$, then
\begin{align}
\D(\alpha,\beta)\le R^2(1-\alpha)^2(1-\beta)^2+\sigma^2\alpha^2(1-\beta)^2+\sigma^2\beta^2; \label{eq:rectangle-upper}
\end{align}
\item for any $1\le \zeta\le \hat{a}$, define
\begin{align}
\alpha(\zeta):=\frac{\zeta+1}{M\zeta+1} \qquad \text{and} \qquad \beta(\zeta):=\frac{\zeta+1}{\zeta+M}. \label{eq:boundary-pair}
\end{align}
Then
\begin{align}
\D(\alpha(\zeta),\beta(\zeta))\le R^2T(\zeta)+\sigma^2U(\zeta). \label{eq:boundary-upper}
\end{align}
\end{enumerate}
\end{lemma}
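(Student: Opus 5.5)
Throughout, write $r_0:=x_0-x_\star$, $e_i:=g(x_0,i)-\nabla f(x_0)$ and $e'_{ij}:=g(x_1^{(i)},j)-\nabla f(x_1^{(i)})$. By Assumption~\ref{ass:oracle}, for each fixed $i$ the $e'_{ij}$ have mean zero over $j$ and mean squared norm at most $\sigma^2$; likewise for the $e_i$. The first step is to use unbiasedness in the inner average over $j$:
\begin{align*}
\frac{1}{|\mathcal{I}|}\sum_{j}\norm{x_2^{(i,j)}-x_\star}^2=\norm{x_1^{(i)}-x_\star-\beta\nabla f(x_1^{(i)})}^2+\beta^2\frac{1}{|\mathcal{I}|}\sum_j\norm{e'_{ij}}^2,
\end{align*}
and the last term is at most $\sigma^2\beta^2$. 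This produces the additive $\sigma^2\beta^2$ in part~(i). The remaining task is to bound the average over $i$ of the deterministic two-step quantity $\norm{y_i-x_\star-\beta\nabla f(y_i)}^2$, where $y_i:=x_1^{(i)}=x_0-\alpha\nabla f(x_0)-\alpha e_i$.

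\textbf{Part (i).} When $0\le\beta\le\bar\alpha=2/(M+1)$, the gradient step $y\mapsto y-\beta\nabla f(y)$ is a contraction toward $x_\star$ with factor $\max\{|1-\beta|,|1-M\beta|\}=1-\beta$. This follows from~\eqref{eq:interpolation} with $(u,v)=(y,x_\star)$ (equivalently, from coercivity of $\nabla f$). It gives the bound $(1-\beta)^2\norm{y_i-x_\star}^2$. Averaging over $i$ and again using $\sum_i e_i=0$, we get
\begin{align*}
\frac{1}{|\mathcal{I}|}\sum_i\norm{y_i-x_\star}^2\le\norm{r_0-\alpha\nabla f(x_0)}^2+\alpha^2\sigma^2\le (1-\alpha)^2R^2+\alpha^2\sigma^2,
\end{align*}
by the same contraction argument with $\alpha\le\bar\alpha$. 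Combining the two estimates yields~\eqref{eq:rectangle-upper}. This part is routine.

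\textbf{Part (ii).} Here we must deal with $\beta(\zeta)>\bar\alpha$, which may even exceed $2/M$, so per-step contraction is unavailable. The main obstacle is therefore a joint two-step certificate that holds despite the noise. The plan is to write $\norm{x_2^{(i,j)}-x_\star}^2$ in Gram form in the variables $r_0$, $g_0:=\nabla f(x_0)$, $e_i$, $g_1^{(i)}:=\nabla f(y_i)$ and $e'_{ij}$. We then subtract a nonnegative combination
\begin{align*}
\lambda_1\mathcal{Q}(x_\star,x_0)+\lambda_2\mathcal{Q}(x_0,x_\star)+\lambda_3\mathcal{Q}(x_\star,y_i)+\lambda_4\mathcal{Q}(y_i,x_\star)+\lambda_5\mathcal{Q}(x_0,y_i)+\lambda_6\mathcal{Q}(y_i,x_0),
\end{align*}
averaged over $i$. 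The multipliers $\lambda_k$ are functions of $\zeta$, and we choose them so that the function values cancel. The aim is for the remainder to equal
\begin{align*}
T(\zeta)\norm{r_0}^2+U(\zeta)\cdot(\text{noise terms}) - \text{(PSD quadratic form)}.
\end{align*}
The cross terms $\langle e_i,\cdot\rangle$ that are linear in $e_i$ with deterministic partners vanish on averaging. However, $g_1^{(i)}$ depends on $e_i$, so it must be kept as a free variable inside the PSD form.

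The parametrization~\eqref{eq:boundary-pair} is clearly designed to make the deterministic part tight. In the noiseless case, $(\alpha(\zeta),\beta(\zeta))$ satisfies the factorization of the two-step polynomial $(1-\alpha\lambda)(1-\beta\lambda)$, which is equioscillation-type at $\lambda\in\{1,M\}$. Its worst-case value on quadratics is $\sqrt{T(\zeta)}$, and this is attained, e.g., at $\lambda=1$. So the strategy is first to reproduce the deterministic silver-type certificate of~\cite{altschuler2025acceleration} with $\zeta$-dependent multipliers. We would then verify that, with the noise included, the leftover coefficient of $\sigma^2$ is exactly $U(\zeta)$: $\beta^2$ plus an amplified $\alpha^2$-type term coming from how $e_i$ propagates through the second step.

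The hardest step will be guessing the multipliers and certifying the residual quadratic form as PSD for all $\zeta\in[1,\hat a]$. I would first compute these on the $2$-dimensional quadratic worst case, where $f$ is piecewise quadratic with curvatures $1$ and $M$ and the noise lies along an eigendirection, to read off candidate $\lambda_k(\zeta)$. Then I would check PSD through principal minors, expecting the range limit $\zeta\le\hat a$ to be exactly where some minor changes sign. The fact that $\eta^2=2M^2-2M+1$ appears in $U$ suggests that this is the right structure.
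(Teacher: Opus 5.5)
Your Part~(i) is correct and follows the paper's argument. You strip off the second-step noise by unbiasedness, use that $x\mapsto x-h\nabla f(x)$ is $(1-h)$-Lipschitz for $0\le h\le\bar\alpha$, and strip off the first-step noise the same way. (The Lipschitz property needs $\mathcal{Q}(u,v)+\mathcal{Q}(v,u)\ge0$ together with $\norm{\nabla f(u)-\nabla f(v)}\ge\norm{u-v}$, not a single $\mathcal{Q}(y,x_\star)\ge0$, which still carries function values.)

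Part~(ii) has a genuine gap. What you give is a search plan rather than a proof: you provide no multipliers, no identity and no positivity check, and that certificate is the whole content of~(ii). Your heuristic for finding it is also weak. On the worst case $f^M_{\phi_1}$, and on any quadratic whose Hessian has eigenvalues in $\{1,M\}$, every $\mathcal{Q}(u,v)=-\frac{(u-v)^\top(H-I)(H-MI)(u-v)}{2(M-1)}$ vanishes identically. So complementary slackness at that instance does not single out any multipliers.

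More importantly, the family of inequalities you commit to is narrower than what the paper uses. You allow only per-branch inequalities among $\{x_\star,x_0,y_i\}$. The paper's certificate additionally uses the inter-branch inequalities $\mathcal{Q}(y_i,y_j)$, $i\ne j$, with weight $d(\zeta)/|\mathcal{I}|^2$, where $d(\zeta)=\beta(\zeta)\bigl((M+1)\beta(\zeta)-2\bigr)>0$ for $\zeta>1$. It pairs them with the square $\frac{d(\zeta)}{(M-1)|\mathcal I|}\sum_i\norm{M\alpha e_i+\nabla f(y_i)-\frac{1}{|\mathcal I|}\sum_j\nabla f(y_j)}^2$. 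Summed, these two pieces collapse to
\[
d(\zeta)\,\frac{1}{|\mathcal I|}\sum_i\bigl(M\alpha^2\norm{e_i}^2+\alpha\langle e_i,\nabla f(y_i)\rangle\bigr)\ge0 .
\]
Because $y_i-\bar y=-\alpha e_i$, this is the smoothness bound $\sum_i\langle y_i-\bar y,\nabla f(y_i)\rangle\le M\sum_i\norm{y_i-\bar y}^2$ over the cloud of first iterates. It is exactly what controls the noise--gradient coupling $\langle e_i,\nabla f(y_i)\rangle$, which you flag but propose to leave free in a per-branch PSD form.

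A certificate built only from per-branch triples would prove the bound for a relaxation in which each branch $i$ may use its own interpolating function, agreeing with the others only at $x_\star$ and $x_0$. In that relaxation nothing ties the spread of $\{\nabla f(y_i)\}$ to the spread of $\{y_i\}$. For example, different branches may see different Hessians, so even terms linear in $e_i$ need not average out. You give no argument that $T(\zeta)R^2+U(\zeta)\sigma^2$ survives this relaxation. To close the gap, either add the $\mathcal{Q}(y_i,y_j)$ terms and the centred-gradient square and then exhibit and verify the multipliers, or prove that a per-branch certificate exists.
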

\begin{proof}
See Section~\ref{sec:upper}.
\end{proof}
\subsubsection*{Lower bounding function}
To formulate a lower bounding function for $\D(\alpha,\beta)$, we next construct a family of two-dimensional functions in $\Fclass$, parameterized by a stepsize pair $(\alpha,\beta)\in\R_{\geq 0}\times\R_{\geq 0}$. The first coordinate captures convergence performance , whereas the second captures stochastic-noise propagation, allowing these two effects to be controlled independently.
\par We set $x_\star=(0,0)$ and $x_0=(R,0)$. For $(M,R)\in\R_{>1}\times\R_{\geq 0}$ and any $0\le\alpha\le1$, define
\begin{align*}
\delta_\alpha:=\frac{1-\alpha}{1+\alpha(M-1)} \qquad \text{and} \qquad b_\alpha:=\delta_\alpha R.
\end{align*}
We further consider the following one-dimensional functions:
\begin{equation}\label{eq:phi_functions}
\begin{aligned}
&\phi_1(z):=\frac12z^2, \quad \phi_2(z):=\frac{M}{2}z^2, \quad
\phi_3(z):=
\begin{cases}
z^2/2, & \text{if }z\le0\\
Mz^2/2, & \text{if }z\ge0,
\end{cases} \\
\text{and} \quad
&\phi_{4,\alpha}(z):=
\begin{cases}
Mz^2/2, & \text{if } z\le b_\alpha\\
z^2/2+(M-1)b_\alpha z-(M-1)b_{\alpha}^2/2, & \text{if }z\ge b_\alpha.
\end{cases}, \qquad \alpha\in [0,1].
\end{aligned}
\end{equation}
All four functions described above belong to $\Fclass$ and have zero as their unique minimizer. For any admissible choice of $\phi\in\{\phi_1,\phi_2,\phi_3,\phi_{4,\alpha}\}$ and any $\lambda\in\{1,M\}$, we define
\begin{align}\label{eq:f_phi_lambda}
f_{\phi}^{\lambda}(z,y):=\phi(z)+\frac{\lambda}{2}y^2.
\end{align}
We further consider a stochastic oracle under which, for any $i\in\{-1,1\}$, $\phi\in\{\phi_1,\phi_2,\phi_3,\phi_{4,\alpha}\}$, and $\lambda\in\{1,M\}$, the stochastic gradient estimates generated at every $(z,y)\in\R^2$ are
\begin{align}\label{eq:sg_estimates}
g_{\phi}^{\lambda}((z,y),i):=\nabla f_{\phi}^{\lambda}(z,y)+(0,i\sigma),
\end{align}
which are unbiased estimators of $\nabla f_{\phi}^{\lambda}(z,y)$ with variance bounded by $\sigma^2$.
The choice of $\phi$ controls the deterministic behavior of the $z$-coordinate, whereas the choice of $\lambda$ controls the noise propagation appearing in the $y$-coordinate.
\par We now define several quantities used to characterize the performance of SG on the functions $f_{\phi}^{\lambda}$. With $M\in\R_{>1}$ and $\gamma\in\R_{\geq 0}$ defined in~\eqref{eq:basic-parameters}, for any $(\alpha,\beta)\in\R_{\geq 0}\times\R_{\geq 0}$, define
\begin{align}
r_1(\alpha,\beta)&:=|(1-\alpha)(1-\beta)|, \quad\quad r_2(\alpha,\beta):=|(M\alpha-1)(M\beta-1)|, \label{eq:r12}\\
r_3(\alpha,\beta)&:=|(M\alpha-1)(1-\beta)|, \quad\quad r_4(\alpha,\beta):=\frac{|(1-\alpha)(M\beta-1)|}{1+\alpha(M-1)}, \label{eq:r34}\\
\rho_0(\alpha,\beta)&:=\max\{r_1(\alpha,\beta),r_2(\alpha,\beta),r_3(\alpha,\beta)\}, \label{eq:extended-rho}\\\widehat\rho(\alpha,\beta)&:=
\begin{cases}
\max\{\rho_0(\alpha,\beta),r_4(\alpha,\beta)\}, &\text{if } 0\le\alpha\le1,\\
\rho_0(\alpha,\beta), &\text{if } \alpha>1,
\end{cases} \label{eq:rho_hat}\\
\text{and}\quad \widehat L_\gamma(\alpha,\beta)&:=\gamma\widehat\rho(\alpha,\beta)^2+\alpha^2\max\{(1-\beta)^2,(1-M\beta)^2\}+\beta^2. \label{eq:extended-envelope}
\end{align}
The factors $r_1(\alpha,\beta)$, $r_2(\alpha,\beta)$, $r_3(\alpha,\beta)$, and $r_4(\alpha,\beta)$ arise from the one-dimensional functions $\phi_1(\cdot)$, $\phi_2(\cdot)$, $\phi_3(\cdot)$, and $\phi_{4,\alpha}(\cdot)$ defined in~\eqref{eq:phi_functions}, respectively, and determine the first term on the right-hand side of~\eqref{eq:extended-envelope}. The maximum in the second term on the right-hand side of~\eqref{eq:extended-envelope} arises from the choice $\lambda\in\{1,M\}$, whereas the final term $\beta^2$ is determined by the second SG step. Overall, $\widehat L_\gamma(\alpha,\beta)$ is the normalized terminal error attained by SG with stepsizes $(\alpha,\beta)\in\R_{\geq 0}\times\R_{\geq 0}$ on the functions in~\eqref{eq:f_phi_lambda} and with the stochastic gradient estimates in~\eqref{eq:sg_estimates}. The following lemma formalizes the resulting lower bounding function for every stepsize pair $(\alpha,\beta)\in\R_{\geq 0}\times\R_{\geq 0}$.
\begin{lemma}[Lower bounding function]\label{lem:hard-lower}
Under Assumption~\ref{ass:oracle}, consider parameters $(M,R,\sigma)\in\R_{>1}\times\R_{\geq 0}\times\R_{>0}$ and use the notation in~\eqref{eq:worst-case-error}--\eqref{eq:optimal-pair}.
For every stepsize pair $(\alpha,\beta)\in\R_{\geq 0}\times\R_{\geq 0}$,
\begin{align}
\D(\alpha,\beta)\ge\sigma^2\widehat L_\gamma(\alpha,\beta),\label{eq:universal-lower}
\end{align}
where $\widehat L_\gamma(\alpha,\beta)$ is defined in~\eqref{eq:extended-envelope}.
\end{lemma}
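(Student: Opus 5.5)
The plan is to evaluate SG exactly on the two-dimensional test functions $f_\phi^\lambda$ in~\eqref{eq:f_phi_lambda} with the oracle~\eqref{eq:sg_estimates}, and to use the fact that the supremum in~\eqref{eq:worst-case-error} dominates each such instance. First I check admissibility of each instance. Each $f_\phi^\lambda$ has $x_\star=(0,0)$. The initial iterate $x_0=(R,0)$ satisfies $\norm{x_0-x_\star}=R$. The index set is $\mathcal{I}=\{-1,1\}$, sampled uniformly, so the noise $(0,i\sigma)$ has mean zero and second moment exactly $\sigma^2$, and Assumption~\ref{ass:oracle} holds with $d=2$.

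Membership $f_\phi^\lambda\in\Fclass$ reduces to checking that each one-dimensional $\phi$ is $C^1$ with $\phi''\in[1,M]$ a.e. and $\phi(0)=\phi'(0)=0$. This is immediate for $\phi_1,\phi_2,\phi_3$. For $\phi_{4,\alpha}$, both pieces give the value $Mb_\alpha^2/2$ and the slope $Mb_\alpha$ at $z=b_\alpha$, and $b_\alpha\ge0$ because $\alpha\le1$. Hence $\phi_{4,\alpha}$ is $C^1$, $1$-strongly convex, $M$-smooth, and minimized at $0$.

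Because $f_\phi^\lambda$ is separable and the noise only enters the $y$-coordinate, the terminal error splits as $z_2^2+\frac14\sum_{i,j}(y_2^{(i,j)})^2$, where $z_2$ is deterministic.

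\emph{The $y$-coordinate.} Here $y_1^{(i)}=-\alpha i\sigma$ and $y_2^{(i,j)}=(1-\lambda\beta)y_1^{(i)}-\beta j\sigma$. Averaging over independent $i,j\in\{\pm1\}$ kills the cross term and gives $\sigma^2\bigl(\alpha^2(1-\lambda\beta)^2+\beta^2\bigr)$. Choosing $\lambda\in\{1,M\}$ yields the maximum in~\eqref{eq:extended-envelope}.

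\emph{The $z$-coordinate.} I compute $z_2$ for each $\phi$:
\begin{itemize}[leftmargin=2em]
\item $\phi_1$ and $\phi_2$ give $|z_2|=Rr_1$ and $|z_2|=Rr_2$ directly.
\item For $\phi_3$ with $M\alpha\ge1$: $z_1=(1-M\alpha)R\le0$, so the second step uses curvature $1$ and $|z_2|=Rr_3$. If instead $M\alpha<1$, then $0\le1-M\alpha\le1-\alpha$ gives $r_3\le r_1$, so $r_3$ never strictly attains the maximum and needs no separate instance.
\item For $\phi_{4,\alpha}$ with $\alpha\le1$: $\nabla\phi_{4,\alpha}(R)=R+(M-1)b_\alpha$. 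A one-line computation gives $z_1=R\bigl(1-\alpha-\alpha(M-1)\delta_\alpha\bigr)=\delta_\alpha R=b_\alpha$, which lands exactly on the kink where the slope is $Mb_\alpha$. Then $z_2=(1-M\beta)b_\alpha$, so $|z_2|=Rr_4$.
\end{itemize}

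Finally, since the choices of $\phi$ and $\lambda$ are independent, the maximum over instances of the sum is the sum of the maxima. This gives $\D(\alpha,\beta)\ge R^2\widehat\rho(\alpha,\beta)^2+\sigma^2\bigl(\alpha^2\max\{(1-\beta)^2,(1-M\beta)^2\}+\beta^2\bigr)=\sigma^2\widehat L_\gamma(\alpha,\beta)$. The case $\alpha>1$ simply omits $\phi_{4,\alpha}$, consistent with~\eqref{eq:rho_hat}.

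The only delicate points are the $\phi_{4,\alpha}$ computation showing $z_1=b_\alpha$ exactly (together with the $C^1$ check at the kink) and the sign case analysis for $\phi_3$; everything else is routine bookkeeping.
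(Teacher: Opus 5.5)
Your proposal is correct and takes essentially the same route as the paper. You use the same separable test functions $f_\phi^\lambda$, with oracle noise confined to the $y$-coordinate, compute $z_2$ and $y_2$ explicitly, and take the supremum over $\phi$ and $\lambda$ separately. Two steps the paper's display leaves implicit are made precise in your version: you note that $r_3\le r_1$ when $M\alpha<1$, so nothing is lost when the $\phi_3$ instance yields $r_2$, and you verify that $z_1=b_\alpha$ lands exactly on the kink of $\phi_{4,\alpha}$.
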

\begin{proof}
For any stepsize pair $(\alpha,\beta)\in\R_{\geq 0}\times\R_{\geq 0}$, we apply the two-step SG update to the function $f_{\phi}^{\lambda}$ defined in~\eqref{eq:f_phi_lambda}, using the stochastic gradient estimates in~\eqref{eq:sg_estimates}, for every $\lambda\in\{1,M\}$ and every $\phi\in\{\phi_1,\phi_2,\phi_3,\phi_4\}$ for which $\alpha$ is appropriate.

Starting from the initial iterate $x_0 = (z_0,y_0) =(R,0)$, we denote the first iterate by $x_1^{\phi,\lambda} = (z_1^{\phi,\lambda},y_1^{\phi,\lambda})\in\R^2$ and the second iterate by $x_2^{\phi,\lambda} = (z_2^{\phi,\lambda},y_2^{\phi,\lambda})\in\R^2$, where, for any $\phi\in\{\phi_1,\phi_2,\phi_3,\phi_{4,\alpha}\}$ and $\lambda\in\{1,M\}$,
\begin{equation}\label{eq:y1_y2}
y_1^{\phi,\lambda} = -i\alpha\sigma \quad\text{and} \quad y_2^{\phi,\lambda} = y_1^{\phi,\lambda} - \lambda\beta y_1 - j\beta\sigma = -i(1-\lambda\beta)\alpha\sigma - j\beta\sigma,
\end{equation}
where $i$ and $j$ are selected independently and uniformly from $\{-1,1\}$. In addition, for any $\phi\in\{\phi_1,\phi_2,\phi_3,\phi_{4,\alpha}\}$ and $\lambda\in\{1,M\}$,
\begin{equation}\label{eq:z2}
z_2^{\phi,\lambda} = \begin{cases}
(1-\alpha)(1-\beta)R &\text{if }\phi = \phi_1, \\
(1-M\alpha)(1-M\beta)R &\text{if }\phi = \phi_2, \\
(1-M\alpha)(1-\beta)R &\text{if }\phi = \phi_3 \text{ and }\alpha \geq 1/M, \\
(1-M\alpha)(1-M\beta)R &\text{if }\phi = \phi_3 \text{ and }0\leq \alpha < 1/M, \\
\frac{(1-\alpha)(1-M\beta)}{1+\alpha(M-1)}R &\text{if }\phi = \phi_{4,\alpha} \text{ and }0\leq\alpha\leq 1.
\end{cases}.
\end{equation}
Using $(z_2^{\phi,\lambda},y_2^{\phi,\lambda})$ in~\eqref{eq:y1_y2}--\eqref{eq:z2}, we can construct  a lower bound for $\D(\alpha,\beta)$ by restricting the supremum in~\eqref{eq:worst-case-error} to the functions in~\eqref{eq:f_phi_lambda} and the stochastic gradient estimates in~\eqref{eq:sg_estimates}. In particular, for any $(\alpha,\beta)\in\R_{\geq 0}\times\R_{\geq 0}$,
\begin{equation}\label{eq:hard-value}
\begin{aligned}
\D(\alpha,\beta) &\geq \sup_{\substack{\phi\in\{\phi_1,\phi_2,\phi_3,\phi_{4,\alpha}\} \\ \lambda\in\{1,M\}}}\frac{1}{4}\sum_{(i,j)\in\{-1,1\}\times\{-1,1\}}\left\|(z_2^{\phi,\lambda},y_2^{\phi,\lambda})\right\|^2 \\
&= \sup_{\lambda\in\{1,M\}}\left\{R^2\widehat\rho(\alpha,\beta)^2+\sigma^2\alpha^2(1-\lambda\beta)^2+\sigma^2\beta^2\right\} \\
&= R^2\widehat\rho(\alpha,\beta)^2+\sigma^2\alpha^2\max\{(1-\beta)^2,(1-M\beta)^2\}+\sigma^2\beta^2,
\end{aligned}
\end{equation}
where the first equality follows from~\eqref{eq:rho_hat}, \eqref{eq:y1_y2}, \eqref{eq:z2}, and the independence between $i\in\{-1,1\}$ and $j\in\{-1,1\}$, and the last equality follows by taking the supremum over $\lambda\in\{1,M\}$.
Finally, the definitions of $\gamma$ in~\eqref{eq:basic-parameters} and $\widehat L_\gamma(\alpha,\beta)$ in~\eqref{eq:extended-envelope} yield the claimed bound.
\end{proof}

To obtain a global lower bound on $\min_{(\alpha,\beta)\in\R_{\geq 0}\times\R_{\geq 0}}\D(\alpha,\beta)$ from Lemma~\ref{lem:hard-lower}, we next compute $\min_{(\alpha,\beta)\in\R_{\geq 0}\times\R_{\geq 0}}\widehat L_\gamma(\alpha,\beta)$ in Lemma~\ref{lem:minimization}.


\begin{lemma}[Global minimization of the lower bounding function]\label{lem:minimization}
Under Assumption~\ref{ass:oracle}, consider parameters $(M,R,\sigma)\in\R_{>1}\times\R_{\geq 0}\times\R_{>0}$ and use the notation in~\eqref{eq:worst-case-error}--\eqref{eq:optimal-pair}.
For every $\gamma\in\R_{\geq 0}$, with $\gamma$ defined in~\eqref{eq:basic-parameters},
\begin{align}
\min_{(\alpha,\beta)\in\R_{\geq 0}\times\R_{\geq 0}}\widehat L_\gamma(\alpha,\beta)=V^\star(\gamma), \label{eq:lower-envelope-minimum}
\end{align}
where $\widehat L_\gamma(\alpha,\beta)$ and $V^\star(\gamma)$ are defined in~\eqref{eq:extended-envelope} and~\eqref{eq:optimal-value}, respectively.
If $\gamma\ne\gamma_\star$, the unique global minimizer is the pair $(\alpha^\star,\beta^\star)$ in~\eqref{eq:optimal-pair}. Moreover, when $\gamma=\gamma_\star$,
\begin{align}
\operatorname*{argmin}_{(\alpha,\beta)\in\R_{\geq 0}\times\R_{\geq 0}}\widehat L_\gamma(\alpha,\beta)=\left\{(\bar\alpha,\bar\alpha),\left(\frac{2}{1+\eta},\frac{2}{1+2M-\eta}\right)\right\}. \label{eq:lower-envelope-argmin}
\end{align}
\end{lemma}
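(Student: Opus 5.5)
We minimize $\widehat L_\gamma$ over $\R_{\geq 0}\times\R_{\geq 0}$ by cutting the quadrant into pieces on which the maxima in~\eqref{eq:rho_hat} and~\eqref{eq:extended-envelope} have a fixed active branch. On each piece we minimize a smooth function and then compare the piecewise minima.

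\textbf{Step 1: reduce to the box.} First we discard the region outside $[0,\hat a\text{-related box}]$. For $\alpha>1$ or $\beta$ large, the noise term alone satisfies $\alpha^2\max\{(1-\beta)^2,(1-M\beta)^2\}+\beta^2\ge V^\star(\gamma)$. For moderate $\alpha$ with $\alpha\ge 2/(1+\eta)$ or $\beta\ge 2/(1+2M-\eta)$, the factors $r_2,r_3$ force $\widehat\rho$ to be at least its value on the silver pair. This should confine the candidate minimizers to $\alpha\le 2/(1+\eta)$ and $\beta\le 2/(1+2M-\eta)$.

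\textbf{Step 2: inside $[0,\bar\alpha]^2$.} Here $M\beta\le 2M/(M+1)<2$ and $\beta\le 1$, so we can determine which branch of the noise maximum is active. In the regime $\beta\le\bar\alpha$ we have $(1-\beta)^2\ge(1-M\beta)^2$, and $r_1$ dominates $r_2,r_3,r_4$; both claims need checking. Then $\widehat L_\gamma=\gamma(1-\alpha)^2(1-\beta)^2+\alpha^2(1-\beta)^2+\beta^2$, which is exactly the upper bound~\eqref{eq:rectangle-upper} divided by $\sigma^2$.
\begin{itemize}[leftmargin=2em]
\item Minimizing in $\alpha$ gives $\alpha=\gamma/(1+\gamma)$, with value $(1-\beta)^2\gamma/(1+\gamma)+\beta^2$.
\item Minimizing that in $\beta$ gives $\beta=\gamma/(1+2\gamma)$.
\end{itemize}
This is Regime~I. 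The constraint $\alpha\le\bar\alpha$ becomes active exactly at $\gamma=\gamma_1$. Once $\alpha=\bar\alpha$ is fixed, the one-variable quadratic in $\beta$ produces the Regime~II formula. The constraint $\beta\le\bar\alpha$ becomes active at $\gamma_2$, after which $(\bar\alpha,\bar\alpha)$ is the box minimizer.

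\textbf{Step 3: outside the box but within Step 1's region.} Here $r_2$ or $r_4$ becomes the active factor, together with the noise branch $(1-\beta)^2$. We would show that the minimum is attained on the curve where the active factors balance, and that this curve is parametrized by~\eqref{eq:boundary-pair}. Along it the value is exactly $\gamma T(\zeta)+U(\zeta)$, consistent with Lemma~\ref{lem:upper}(ii). We would then minimize $\gamma T(\zeta)+U(\zeta)$ over $\zeta\in[1,\hat a]$. Because $T$ is decreasing by~\eqref{eq:T_decreasing}, the function is affine in $\gamma$ with negative slope in the $T$ direction. This suggests the minimum sits at an endpoint, $\zeta=1$ or $\zeta=\hat a$, with the switch at $\gamma_\star$ by the definition~\eqref{eq:gamma-star}. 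Proving this endpoint property amounts to showing that $\zeta\mapsto (U(\zeta)-U(1))/(T(1)-T(\zeta))$ is maximized at $\hat a$, or an equivalent concavity-type statement. The endpoint $\zeta=1$ gives $(\bar\alpha,\bar\alpha)$ and $\zeta=\hat a$ gives the silver pair. Uniqueness comes from strict convexity in Step 2 and from the strict endpoint comparison in Step 3. At $\gamma=\gamma_\star$ there is a tie, which gives~\eqref{eq:lower-envelope-argmin}.

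\textbf{Main obstacle.} The hardest part is Step 3 together with Step 1. We must rigorously show that off the balancing curve some $r_k$ strictly increases without a compensating drop in the noise term. We also need the endpoint property of the curve minimization, which reduces to a polynomial inequality in $(\zeta,M)$. For that inequality we would first try clearing denominators and factoring out $(\zeta-1)$ and $(\hat a-\zeta)$.
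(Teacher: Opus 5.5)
Your Step~2 is correct and matches the paper's treatment of $Q_{00}=[0,\bar\alpha]^2$. Steps~1 and~3, however, contain genuine gaps.

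\textbf{Step~1 fails as stated.} Since $\eta^2-M^2=(M-1)^2>0$, we have $\eta>M$ and hence $2/(1+\eta)<\bar\alpha$. Confining to $\alpha\le 2/(1+\eta)$ would therefore discard the true minimizers $(\bar\alpha,\beta^\star)$ of Regimes~II--III, and also $\alpha^\star=\gamma/(1+\gamma)$ for $\gamma$ near $\gamma_1$.

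Your claim that the noise term alone is at least $V^\star(\gamma)$ when $\alpha>1$ is also false for large $\gamma$. Indeed $V^\star(\gamma)=\gamma T(\hat a)+U(\hat a)\to\infty$, while at $\beta=\bar\alpha$ with $\alpha$ slightly above $1$ the noise term stays below $1$.

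The paper's reduction is a truncation argument instead. If $\alpha>1$ (resp.\ $\beta>1$), replacing it by $1$ does not increase any $r_k$ and strictly decreases the noise term, so every minimizer lies in $[0,1]^2$. Because your Step~1 was meant to exclude it, you also never treat $[\bar\alpha,1]\times[0,\bar\alpha]$. That case is easy: there $\widehat L_\gamma$ is increasing in $\alpha$, so it reduces to $\alpha=\bar\alpha$. But it must be done.

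\textbf{Step~3 lacks the key mechanism.} This step is where the proof actually lives, and several parts of your plan are off.
\begin{enumerate}[leftmargin=2em]
\item For $\beta>\bar\alpha$ the active noise branch is $(1-M\beta)^2$, not $(1-\beta)^2$; this is what produces $U$.
\item The minimizers over $[0,1]\times[\bar\alpha,1]$ lie where $r_1$ (not $r_2$ or $r_4$) is active. For $\gamma\ge\gamma_1$ they sit on the edge $r_1=r_2$.
\item Localization onto the curve~\eqref{eq:boundary-pair} is not uniform in $\gamma$. For $\gamma<\gamma_1$ the minimizer over that region is $(\gamma/(1+\gamma),\bar\alpha)$, which is off the curve. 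So ``some $r_k$ strictly increases off the curve without compensation'' cannot be proved as a $\gamma$-free monotonicity statement; it is a $\gamma$-dependent trade-off.
\end{enumerate}

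The paper resolves this as follows.
\begin{enumerate}[leftmargin=2em]
\item It uses the substitution~\eqref{eq:upper-transform}. In the $r_1$-active region $\Omega_1$ one has $\chi_\beta\in[1,\hat a]$ and $\chi_\alpha\le 1/\chi_\beta$. Moreover, $\partial_{\chi_\alpha}\ell_\gamma$ has the sign of $\chi_\beta^2(\chi_\alpha+1)-\gamma(M-1)$.
\item The extra inequality~\eqref{eq:gamma-star-derivative-bound}, $\gamma_\star(M-1)>\hat a(\hat a+1)$, then forces $\chi_\alpha=1/\chi_\beta$, i.e.\ your curve, for every $\gamma\ge\gamma_\star$.
\item For $\gamma\in[\gamma_1,\gamma_\star)$ no direct localization is proved. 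Instead, $\ell_\gamma(\chi_\alpha,\chi_\beta)-h_\gamma(1)$ is affine in $\gamma$ and nonnegative at $\gamma_1$ (by explicit minimization) and at $\gamma_\star$. Hence it is nonnegative in between, strictly so except at $(1,1)$.
\item The $r_2$- and $r_4$-active regions are handled by monotonicity, which pushes their minimizers into $\Omega_1$.
\end{enumerate}

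\textbf{Endpoint criterion.} Your criterion has the wrong direction. You need $(U(\zeta)-U(1))/(T(1)-T(\zeta))$ to be \emph{minimized} at $\hat a$, i.e.\ $h_{\gamma_\star}(\zeta)>h_{\gamma_\star}(1)$ on $(1,\hat a)$. Both sides of $\gamma_\star$ then follow because $T$ is decreasing. Your idea of factoring out $(\zeta-1)$ and $(\zeta-\hat a)$ is exactly the paper's certificate in Lemma~\ref{lem:strict-chord}, whose remaining quadratic factor is shown to be negative on $[1,\hat a]$.
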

\begin{proof}
See Section~\ref{sec:minimization}.
\end{proof}

\subsubsection*{Proof of Theorem~\ref{thm:main}}
We now combine Lemmas~\ref{lem:upper}--\ref{lem:minimization} to prove Theorem~\ref{thm:main}.
\begin{proof}[Proof of Theorem~\ref{thm:main}]
We first establish the required upper bounds. For $0\le\gamma<\gamma_\star$, by~\eqref{eq:optimal-pair}, our proposed stepsize schedule satisfies $(\alpha^\star,\beta^\star)\in[0,\bar\alpha]\times[0,\bar\alpha]$. Substituting the first three regimes of~\eqref{eq:optimal-pair} and $R^2=\gamma\sigma^2$ (see~\eqref{eq:basic-parameters}) into Part~(i) of Lemma~\ref{lem:upper} gives
\begin{align}
\frac{\D(\alpha^\star,\beta^\star)}{\sigma^2} \le\gamma(1-\alpha^\star)^2(1-\beta^\star)^2+(\alpha^\star)^2(1-\beta^\star)^2+(\beta^\star)^2=V^\star(\gamma), \label{eq:first-three-attainment}
\end{align}
where the equality follows by direct calculation from~\eqref{eq:optimal-value}. For $\gamma>\gamma_\star$, by~\eqref{eq:basic-parameters},~\eqref{eq:thresholds}, and~\eqref{eq:boundary-pair}, the last regime in~\eqref{eq:optimal-pair} equals $(\alpha(\hat{a}),\beta(\hat{a}))$. Part~(ii) of Lemma~\ref{lem:upper} and the last regimes of~\eqref{eq:optimal-pair} and~\eqref{eq:optimal-value} therefore give
\begin{align}
\frac{\D(\alpha(\hat{a}),\beta(\hat{a}))}{\sigma^2}\le\gamma T(\hat{a})+U(\hat{a})=V^\star(\gamma). \label{eq:fourth-attainment}
\end{align}
When $\gamma=\gamma_\star$, applying Part~(i) of Lemma~\ref{lem:upper} to $(\bar\alpha,\bar\alpha)$ and Part~(ii) to $(\alpha(\hat{a}),\beta(\hat{a}))$, we obtain from~\eqref{eq:T}--\eqref{eq:gamma-star} and~\eqref{eq:boundary-pair} that
\begin{align}
\frac{\D(\bar\alpha,\bar\alpha)}{\sigma^2}&\le\gamma_\star T(1)+U(1)=V^\star(\gamma_\star),\notag\\
\text{and} \quad \frac{\D\left(\frac{2}{1+\eta},\frac{2}{1+2M-\eta}\right)}{\sigma^2} &= \frac{\D(\alpha(\hat{a}),\beta(\hat{a}))}{\sigma^2} \le\gamma_\star T(\hat{a})+U(\hat{a})=V^\star(\gamma_\star). \label{eq:transition-attainment}
\end{align}
Lemmas~\ref{lem:hard-lower}--\ref{lem:minimization} together give the following lower bound on $\D$ over $\R_{\geq 0}\times\R_{\geq 0}$: for any $(\alpha,\beta)\in\R_{\geq 0}\times\R_{\geq 0}$,
\begin{align}
\D(\alpha,\beta)\ge\sigma^2V^\star(\gamma). \label{eq:universal-optimal-value-lower}
\end{align}
When $\gamma\ne\gamma_\star$, applying~\eqref{eq:universal-optimal-value-lower} to $(\alpha^\star,\beta^\star)$ and combining the result with~\eqref{eq:first-three-attainment}--\eqref{eq:fourth-attainment} gives
\begin{align}
\sigma^2V^\star(\gamma)\le\D(\alpha^\star,\beta^\star)\le\sigma^2V^\star(\gamma). \label{eq:proposed-pair-attainment}
\end{align}
Thus, $\min_{(\alpha,\beta)\in\R_{\geq 0}\times\R_{\geq 0}}\D(\alpha,\beta) = \D(\alpha^\star,\beta^\star)=\sigma^2V^\star(\gamma)$ in this case. When $\gamma=\gamma_\star$, applying~\eqref{eq:universal-optimal-value-lower} to each of the two proposed pairs, namely, $(\bar\alpha,\bar\alpha)$ and $(\frac{2}{1+\eta},\frac{2}{1+2M-\eta})$, and combining the resulting lower bounds with~\eqref{eq:transition-attainment} gives
\begin{align}
\sigma^2V^\star(\gamma_\star)&\le\D(\bar\alpha,\bar\alpha)\le\sigma^2V^\star(\gamma_\star),\notag\\
\sigma^2V^\star(\gamma_\star)&\leq \D\left(\frac{2}{1+\eta},\frac{2}{1+2M-\eta}\right) = \D(\alpha(\hat{a}),\beta(\hat{a})) \leq \sigma^2V^\star(\gamma_\star). \label{eq:transition-pair-attainment}
\end{align}
Hence, both proposed pairs attain $\sigma^2V^\star(\gamma_\star)$. Since~\eqref{eq:universal-optimal-value-lower} applies to every $(\alpha,\beta)\in\R_{\geq 0}\times\R_{\geq 0}$, the proposed stepsize schedule is globally optimal, and for every $\gamma\geq 0$,
\begin{align}
\min_{(\alpha,\beta)\in\R_{\geq 0}\times\R_{\geq 0}}\D(\alpha,\beta) = \D(\alpha^{\star},\beta^{\star})=\sigma^2V^\star(\gamma).
\end{align}
Finally, the characterization of $\operatorname*{argmin}_{(\alpha,\beta)\in\R_{\geq 0}\times\R_{\geq 0}}\D(\alpha,\beta)$ for $\gamma \geq 0$ follows directly from Lemmas~\ref{lem:hard-lower}--\ref{lem:minimization}.
\end{proof}

\section{Conclusion}
\par This paper determines the exact, globally optimal two-step stepsize schedule for SG on smooth strongly convex functions under an unbiased, bounded-variance stochastic oracle. The schedule depends on the smoothness parameter $M$ and the ratio $\gamma=R^2/\sigma^2$, and its four regimes describe the transition from noise-dominated to distance-dominated settings. For small $\gamma$, both stepsizes adapt to the noise level; as $\gamma$ increases, the schedule passes through a partially constant pair and then the constant pair $(\bar\alpha,\bar\alpha)$; for sufficiently large $\gamma$, it recovers the deterministic two-step silver stepsize schedule in~\cite{altschuler2025acceleration}. We prove the result by deriving upper and lower bounding functions for the convergence measure and showing that they coincide at our proposed schedule, thereby establishing global optimality.
We also identify the complete set of optimal two-step stepsize schedules: the optimizer is unique for $\gamma\ne\gamma_\star$, while exactly two schedules are optimal at $\gamma=\gamma_\star$.

\bibliographystyle{abbrv}
\bibliography{reference}

\appendix
\section{Supplementary Proofs}
This section supplies the technical arguments deferred from the main text. We first verify that the three thresholds $\{\gamma_1,\gamma_2,\gamma_{\star}\}$ in~\eqref{eq:optimal-pair} are strictly ordered and that the piecewise formulas match at their interfaces. We then prove the two complementary upper bounds in Section~\ref{sec:upper}. Finally, in Section~\ref{sec:minimization}, we complete the lower-bound argument: an auxiliary endpoint-comparison lemma controls the exposed boundary, after which we minimize the lower bounding function over all nonnegative stepsize pairs and prove Lemma~\ref{lem:minimization}.
\par Before using the stepsize schedule in~\eqref{eq:optimal-pair}, one must verify that its four regimes neither overlap nor appear in the wrong order and that no artificial discontinuity occurs at $\gamma_1$ or $\gamma_2$. The following lemma establishes these consistency properties for every $M>1$ and also verifies continuity of the optimal-value formula~\eqref{eq:optimal-value} at all three thresholds.
\begin{lemma}[Ordering and matching of the thresholds]\label{lem:thresholds}
Consider parameters $(M,R,\sigma)\in\R_{>1}\times\R_{\geq 0}\times\R_{>0}$ and use the notation in~\eqref{eq:basic-parameters}--\eqref{eq:gamma-star}.
The thresholds $\{\gamma_1,\gamma_2,\gamma_{\star}\}$ in~\eqref{eq:optimal-pair} satisfy $0<\gamma_1<\gamma_2<\gamma_\star$. At $\gamma_1$ and $\gamma_2$, the adjacent expressions in~\eqref{eq:optimal-pair} agree. Moreover, $\frac{\gamma_1}{1+2\gamma_1} = \frac{\gamma_1(M-1)^2 + 4}{\gamma_1(M-1)^2 + 4 + (M+1)^2}$, $\frac{\gamma_2(M-1)^2 + 4}{\gamma_2(M-1)^2 + 4 + (M+1)^2} = \gamma_2 T(1) + U(1)$, and $\gamma_{\star}T(1) + U(1) = \gamma_{\star}T(\hat{a}) + U(\hat{a})$, implying the adjacent expressions in~\eqref{eq:optimal-value} also agree at the boundary thresholds $\gamma_1$, $\gamma_2$, and $\gamma_{\star}$.
\end{lemma}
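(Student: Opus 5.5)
The plan is direct algebraic verification, organized around the closed forms in~\eqref{eq:basic-parameters}--\eqref{eq:gamma-star}.

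\textbf{Ordering.} First, $\gamma_1=2/(M-1)>0$ is immediate. For $\gamma_1<\gamma_2$, clear denominators: the inequality becomes $(M-1)^2<M^2+3$, i.e.\ $-2M+1<3$, which holds for $M>1$.

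\textbf{Matching of the stepsize formulas.} Substitute $\gamma_1(M-1)^2=2(M-1)$ into the second regime. This gives $\beta=(2M+2)/(2M+2+(M+1)^2)=2/(M+3)$, and the first regime gives the same value, $\gamma_1/(1+2\gamma_1)=2/(M+3)$. Likewise $\gamma_1/(1+\gamma_1)=2/(M+1)=\bar\alpha$. At $\gamma_2$ we have $\gamma_2(M-1)^2+4=(2M^2+6+4M-4)/(M-1)=2(M+1)^2/(M-1)$. Hence $\beta=2/(2+(M-1))=\bar\alpha$, so the second and third regimes agree.

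\textbf{Matching of the value formulas.} At $\gamma_1$ the second value expression equals the $\beta$ computed above, and the first equals $\gamma_1/(1+2\gamma_1)$; both are $2/(M+3)$. At $\gamma_2$, compute $T(1)=(M-1)^4/(M+1)^4$ and $U(1)=4(2M^2+1)/(M+1)^4$ directly from~\eqref{eq:T}--\eqref{eq:U}. Then check the identity $\gamma_2(M-1)^4+4(2M^2+1)=2(M+1)^3$. Indeed, $2(M-1)(M^2+3)+8M^2+4=2M^3+6M^2+6M+2$, so $\gamma_2T(1)+U(1)=2/(M+1)=\bar\alpha$, which matches the second value at $\gamma_2$. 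The equality at $\gamma_\star$ is just a rearrangement of the definition~\eqref{eq:gamma-star}.

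\textbf{Main obstacle: $\gamma_2<\gamma_\star$.} This is the only genuinely nontrivial step, because $\hat a$ involves $\eta=\sqrt{2M^2-2M+1}$. Since $T(1)-T(\hat a)>0$ by~\eqref{eq:T_decreasing}, the claim is equivalent to
\[
U(\hat a)-U(1)>\gamma_2\bigl(T(1)-T(\hat a)\bigr),
\]
that is,
\[
\gamma_2T(\hat a)+U(\hat a)>\gamma_2T(1)+U(1)=\bar\alpha .
\]
I would first try to interpret $\gamma T(\zeta)+U(\zeta)$ as a function of $\zeta$ whose minimizer lies at $\zeta=1$ when $\gamma\le\gamma_2$.

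If that fails, the fallback is brute force. Write everything in terms of $\eta$, using $\eta^2=2M^2-2M+1$ to simplify $M\hat a+1=M+\eta$ and $\hat a+M=(M^2+M-1+\eta)/M$. Then reduce the inequality to $P(M)+Q(M)\eta>0$ for polynomials $P$ and $Q$. Finally, verify this either by sign analysis or by squaring, for all $M>1$, checking the limit $M\to1^+$ separately.
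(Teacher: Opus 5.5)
Your checks of $0<\gamma_1<\gamma_2$ and of the stepsize and value matchings at $\gamma_1$, $\gamma_2$, $\gamma_\star$ are the same direct verifications the paper does. For $\gamma_2<\gamma_\star$ your reduction differs from the paper's, and usefully so. Since $T(1)>T(\hat a)$ by \eqref{eq:T_decreasing}, the claim is equivalent to $\gamma_2T(\hat a)+U(\hat a)>\gamma_2T(1)+U(1)=\bar\alpha$. The paper instead writes $\gamma_\star-\gamma_2=(t+2)^3p(t,\eta)/\bigl(t^3(t+1)^3q(t,\eta)\bigr)$, and then needs a case split on the sign of $t^2+3t-2$ to show $q(t,\eta)>0$. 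Your version never produces that denominator. The paper's extra work is not wasted, though, because $q(t,\eta)>0$ is reused as a denominator in Lemma~\ref{lem:strict-chord}.

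As written, however, the one nontrivial claim of the lemma is only a plan. \emph{Reduce to $P(M)+Q(M)\eta>0$ and verify} is not a proof until $P,Q$ are exhibited and their sign is settled. Your first idea is not an alternative route. The identity $h_\gamma(\hat a)-h_\gamma(1)=(\gamma_\star-\gamma)\bigl(T(1)-T(\hat a)\bigr)$ holds for every $\gamma$, so ``the minimizer stays at $\zeta=1$ for $\gamma\le\gamma_2$'' is just a restatement of $\gamma_2<\gamma_\star$.

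The brute-force route does close, and cleanly. Set $t=M-1$ and $s:=t^2+2t+2+2\eta$. Then $(M\hat a+1)(\hat a+M)=(t+\eta)s/(t+1)$, which gives $T(\hat a)=t^4/s^2$ and $U(\hat a)=4(t^2+t+1)(t+1+\eta)^2/\bigl((t+1)^2s^2\bigr)$. From these,
\[
\gamma_2T(\hat a)+U(\hat a)-\bar\alpha=\frac{4t\bigl(3t^4+9t^3+11t^2+8t+2-2(t+1)\eta\bigr)}{(t+2)(t+1)^2s^2}.
\]
Here $Q=-2(t+1)<0$, and the bracket tends to $0$ as $t\to0^+$; this is the degeneracy near $M=1$ that you anticipated. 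Consequently a crude bound such as $\eta<\sqrt2(t+1)$ fails. Instead use $\eta^2=2t^2+2t+1<(1+t+t^2/2)^2$. This gives $2(t+1)\eta<t^3+3t^2+4t+2$, so the bracket exceeds $3t^4+8t^3+8t^2+4t>0$ for every $t>0$. No separate limit check is needed, since $M>1$ is an open condition.

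Finally, there is an arithmetic slip in the $\gamma_2$ matching. The correct value is $U(1)=8(M^2+1)/(M+1)^4$, not $4(2M^2+1)/(M+1)^4$. Your ``indeed'' line is also false as written, since $2(M-1)(M^2+3)+8M^2+4=2M^3+6M^2+6M-2$. The two errors cancel. With the correct value, $\gamma_2(M-1)^4+8(M^2+1)=2(M+1)^3$, so $\gamma_2T(1)+U(1)=\bar\alpha$ does hold.
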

\begin{proof}
It follows directly from $M>1$ and the definitions of $\{\gamma_1,\gamma_2\}$ that $0<\gamma_1<\gamma_2$.
To prove $\gamma_2 < \gamma_{\star}$, we consider $t:=M-1>0$ and $\eta$ defined in~\eqref{eq:basic-parameters}. Direct substitution of~\eqref{eq:T}--\eqref{eq:gamma-star} gives
\begin{align}
\gamma_\star-\gamma_2=\frac{(t+2)^3p(t,\eta)}{t^3(t+1)^3q(t,\eta)}, \label{eq:last-threshold-comparison}
\end{align}
where
\begin{equation}\label{eq:q-threshold}
\begin{aligned}
p(t,\eta)&:=12t^5+(45+9\eta)t^4+(62+30\eta)t^3+(46+34\eta)t^2+(17+21\eta)t+2+6\eta, \\
\text{and}\ \ q(t,\eta)&:=2t^3+7t^2+3t+2+\eta(t^2+3t-2).
\end{aligned}
\end{equation}
Clearly, $p(t,\eta)>0$. If $t^2+3t-2\ge0$, then $q(t,\eta)>0$ as well. Otherwise, when $t^2+3t-2 < 0$,
\begin{align*}
(1+t+t^2)^2-\eta^2=t^2(t+1)^2>0,
\end{align*}
so $\eta<1+t+t^2$. Multiplication by $t^2+3t-2<0$ then gives
\begin{align*}
q(t,\eta)>(1+t+t^2)(t^2+3t-2)+2t^3+7t^2+3t+2=t(t+1)^2(t+4)>0.
\end{align*}
Thus, $q(t,\eta)>0$, so the right-hand side of~\eqref{eq:last-threshold-comparison} is positive and $\gamma_{\star}>\gamma_2$.

Moreover, direct calculation gives
\begin{align*}
&\frac{\gamma_1}{1+2\gamma_1} = \frac{2}{M+3} = \frac{\gamma_1(M-1)^2 + 4}{\gamma_1(M-1)^2 + 4 + (M+1)^2} \\
\text{and}\ \ &\frac{\gamma_2(M-1)^2 + 4}{\gamma_2(M-1)^2 + 4 + (M+1)^2} = \bar\alpha = \gamma_2 T(1) + U(1),
\end{align*}
while the identity $\gamma_{\star}T(1) + U(1) = \gamma_{\star}T(\hat{a}) + U(\hat{a})$ follows from the definition of $\gamma_{\star}$ in~\eqref{eq:gamma-star}. This completes the proof.
\end{proof}

\subsection{Proof of Lemma~\ref{lem:upper}}\label{sec:upper}
\begin{proof}[Proof of Lemma~\ref{lem:upper}]
Without loss of generality, we consider the case $x_\star=0$. For a given stepsize schedule $(\alpha,\beta)\in\R_{\geq 0}\times\R_{\geq 0}$, applying SG to a function in $\Fclass$ under the stochastic oracle in Assumption~\ref{ass:oracle} gives
\begin{equation}\label{eq:condition-second-noise}
\begin{aligned}
&\frac{1}{|\mathcal{I}|^2}\sum_{(i,j)\in\mathcal{I}\times\mathcal{I}} \left\|x_2^{(i,j)}\right\|^2 = \frac{1}{|\mathcal{I}|^2}\sum_{(i,j)\in\mathcal{I}\times\mathcal{I}} \left\|x_1^{(i)} - \beta g(x_1^{(i)},j)\right\|^2 \\
= \ &\frac{1}{|\mathcal{I}|^2}\sum_{(i,j)\in\mathcal{I}\times\mathcal{I}} \left\|x_1^{(i)} - \beta\nabla f(x_1^{(i)}) + \beta\nabla f(x_1^{(i)}) - \beta g(x_1^{(i)},j)\right\|^2 \\
= \ &\frac{1}{|\mathcal{I}|^2}\sum_{(i,j)\in\mathcal{I}\times\mathcal{I}} \left( \left\|x_1^{(i)} - \beta\nabla f(x_1^{(i)})\right\|^2 + \beta^2\left\|\nabla f(x_1^{(i)}) - g(x_1^{(i)},j)\right\|^2\right) \\
\leq \ &\frac{1}{|\mathcal{I}|}\sum_{i\in\mathcal{I}}\left\|x_1^{(i)} - \beta\nabla f(x_1^{(i)})\right\|^2 + \beta^2\sigma^2.
\end{aligned}
\end{equation}

Adding the inequalities $\mathcal Q(u,v)\ge0$ and $\mathcal Q(v,u)\ge0$ from~\eqref{eq:interpolation} yields, for any $u,v\in\R^d$,
\begin{align*}
\langle \nabla f(u)-\nabla f(v), u-v\rangle\ge\frac{M}{M+1}\norm{u-v}^2+\frac{1}{M+1}\norm{\nabla f(u)-\nabla f(v)}^2.
\end{align*}
Moreover, $1$-strong convexity gives $\norm{\nabla f(u)-\nabla f(v)}\ge\norm{u-v}$ for any $u,v\in\R^d$. It follows from these two inequalities that, for any $u,v\in\R^d$ and $0\le h\le2/(M+1)$,
\begin{align}
\norm{u-v-h(\nabla f(u)-\nabla f(v))}^2 &= \norm{u-v}^2 + h^2\norm{\nabla f(u)-\nabla f(v)}^2 - 2h\langle \nabla f(u)-\nabla f(v), u-v\rangle \notag \\
&\leq \left(1 - \frac{2hM}{M+1}\right)\norm{u-v}^2 + \left(h^2 - \frac{2h}{M+1}\right)\norm{\nabla f(u)-\nabla f(v)}^2 \notag \\
&\leq \left(1 - \frac{2hM}{M+1}\right)\norm{u-v}^2 + \left(h^2 - \frac{2h}{M+1}\right)\norm{u-v}^2 \notag \\
&= (1-h)^2\norm{u-v}^2. \label{eq:gradient-map-contraction}
\end{align}
Thus, the gradient map $x\mapsto x-h\nabla f(x)$ is $(1-h)$-Lipschitz.

To prove Part~(i), consider any $(\alpha,\beta)\in[0,2/(M+1)]\times[0,2/(M+1)]$. Using $x_{\star} = \nabla f(x_{\star}) = 0$ gives
\begin{align}
&\frac{1}{|\mathcal{I}|}\sum_{i\in\mathcal{I}}\left\|x_1^{(i)} - \beta\nabla f(x_1^{(i)})\right\|^2 = \frac{1}{|\mathcal{I}|}\sum_{i\in\mathcal{I}}\left\|x_1^{(i)} - x_{\star} - \beta(\nabla f(x_1^{(i)}) - \nabla f(x_{\star}))\right\|^2 \notag \\
\leq \ &\frac{(1-\beta)^2}{|\mathcal{I}|}\sum_{i\in\mathcal{I}}\left\|x_1^{(i)} - x_{\star}\right\|^2 = \frac{(1-\beta)^2}{|\mathcal{I}|}\sum_{i\in\mathcal{I}}\left\|x_0 - x_{\star} - \alpha g(x_0,i) \right\|^2 \notag \\
= \ &\frac{(1-\beta)^2}{|\mathcal{I}|}\sum_{i\in\mathcal{I}}\left\|x_0 - x_{\star} - \alpha (\nabla f(x_0) - \nabla f(x_{\star})) + \alpha (\nabla f(x_0) - g(x_0,i)) \right\|^2 \notag \\
= \ &\frac{(1-\beta)^2}{|\mathcal{I}|}\sum_{i\in\mathcal{I}}\left\|x_0 - x_{\star} - \alpha (\nabla f(x_0) - \nabla f(x_{\star})) \right\|^2 + \frac{(1-\beta)^2\alpha^2}{|\mathcal{I}|}\sum_{i\in\mathcal{I}}\left\|\nabla f(x_0) - g(x_0,i)\right\|^2 \notag \\
\leq \ &(1-\beta)^2(1-\alpha)^2\|x_0 - x_{\star}\|^2 + (1-\beta)^2\alpha^2\sigma^2 \leq (1-\beta)^2\left((1-\alpha)^2R^2+\alpha^2\sigma^2\right), \label{eq:rectangle-proof}
\end{align}
where the inequalities follow from~\eqref{eq:gradient-map-contraction} and the bounds defining $(R,\sigma)\in\R_{\geq 0}\times\R_{>0}$.
Combining~\eqref{eq:condition-second-noise} and~\eqref{eq:rectangle-proof} proves~\eqref{eq:rectangle-upper}.
\par We next prove Part~(ii). For any $\zeta\in[1,\hat{a}]$, define the following quantities using $(\alpha(\zeta),\beta(\zeta))$ from~\eqref{eq:boundary-pair}:
\begin{equation}\label{eq:d_tau_mu}
\begin{aligned}
d(\zeta) &:=\beta(\zeta)\bigl((M+1)\beta(\zeta)-2\bigr), \\
\tau(\zeta) &:=(1-\alpha(\zeta))^2(1-\beta(\zeta))^2, \\
\text{and} \quad  \mu(\zeta) &:=\alpha(\zeta)^2(1-M\beta(\zeta))^2.
\end{aligned}
\end{equation}
For all $(i,j)\in\mathcal{I}\times\mathcal{I}$ satisfying $i\neq j$, we further introduce the following coefficients using $M>1$ and~\eqref{eq:basic-parameters}, \eqref{eq:thresholds}, \eqref{eq:boundary-pair}, and~\eqref{eq:d_tau_mu}:
\begin{equation}\label{eq:lambda_def}
\begin{aligned}
\lambda_{0i}(\zeta)&:=\frac{(1-\alpha(\zeta))d(\zeta)}{|\mathcal{I}|}, \\
\lambda_{\star i}(\zeta)&:=\frac{\alpha(\zeta)(M-1)\beta(\zeta)^2+2(1-\alpha(\zeta))\beta(\zeta)(1-\beta(\zeta))}{|\mathcal{I}|}, \\
\lambda_{i0}(\zeta)&:=\frac{(M\alpha(\zeta)-1)d(\zeta)}{|\mathcal{I}|}, \\
\lambda_{ij}(\zeta)&:=\frac{d(\zeta)}{|\mathcal{I}|^2}, \\
\lambda_{\star0}(\zeta)&:=2\alpha(\zeta)(1-\alpha(\zeta))(1-\beta(\zeta))^2-\alpha(\zeta)(M\alpha(\zeta)-1)d(\zeta), \\
\lambda_{i\star}(\zeta)&:=\frac{(M-1)\beta(\zeta)^2-(M\alpha(\zeta)-1)d(\zeta)}{|\mathcal{I}|}, \\
\text{and} \quad \lambda_{0\star}(\zeta)&:=(1-\alpha(\zeta))\bigl((M\alpha(\zeta)-1)d(\zeta)+2\alpha(\zeta)(1-\beta(\zeta))^2-d(\zeta)\bigr),
\end{aligned}
\end{equation}
Their nonnegativity follows by direct calculation from $1\leq \zeta\leq \hat{a}$. Moreover, using the definitions in~\eqref{eq:lambda_def}, a direct expansion of the two-step SG update gives
\begin{align*}
&\tau(\zeta)\norm{x_0}^2+\frac{\mu(\zeta)}{|\mathcal{I}|}\sum_{i\in\mathcal{I}}\norm{g(x_0,i)-\nabla f(x_0)}^2-\frac{1}{|\mathcal{I}|}\sum_{i\in\mathcal{I}}\norm{x_0 - \alpha(\zeta)g(x_0,i) - \beta(\zeta)\nabla f(x_0 - \alpha(\zeta)g(x_0,i))}^2 \\
={}&\lambda_{0\star}(\zeta)\mathcal Q(x_0,0)+\lambda_{\star0}(\zeta)\mathcal Q(0,x_0)+\sum_{i\in\mathcal{I}}\Bigl(\lambda_{0i}(\zeta)\mathcal Q(x_0,x_0-\alpha(\zeta)g(x_0,i))+\lambda_{i0}(\zeta)\mathcal Q(x_0-\alpha(\zeta)g(x_0,i),x_0)\Bigr) \\
&+\sum_{i\in\mathcal{I}}\Bigl(\lambda_{\star i}(\zeta)\mathcal Q(0,x_0-\alpha(\zeta)g(x_0,i))+\lambda_{i\star}(\zeta)\mathcal Q(x_0-\alpha(\zeta)g(x_0,i),0)\Bigr) \\
&+\sum_{(i,j)\in\mathcal{I}\times\mathcal{I}\ \text{and}\ i\neq j}\lambda_{ij}(\zeta)\mathcal Q(x_0-\alpha(\zeta)g(x_0,i),x_0-\alpha(\zeta)g(x_0,j)) \\
&+\frac{d(\zeta)}{(M-1)|\mathcal{I}|}\sum_{i\in\mathcal{I}}\norm{M\alpha(\zeta)(g(x_0,i)-\nabla f(x_0))+(\nabla f(x_0 - \alpha(\zeta)g(x_0,i)) - \frac{1}{|\mathcal{I}|}\sum_{j\in\mathcal{I}}\nabla f(x_0 - \alpha(\zeta)g(x_0,j)) )}^2, 
\end{align*}
The right-hand side is nonnegative by~\eqref{eq:interpolation}. Combining this nonnegativity with~\eqref{eq:variance}, \eqref{eq:worst-case-error}, \eqref{eq:condition-second-noise}, and $\|x_0 - x_\star\| = \|x_0\| \leq R$ yields, for any $1\leq \zeta \leq \hat{a}$,
\begin{align}
\D(\alpha(\zeta),\beta(\zeta)) &= \sup \frac{1}{|\mathcal{I}|^2}\sum_{(i,j)\in\mathcal{I}\times\mathcal{I}} \left\|x_0 - \alpha(\zeta) g(x_0,i) - \beta(\zeta) g(x_0 - \alpha(\zeta) g(x_0,i), j)\right\|^2 \notag \\
&\leq \sup \left(\frac{1}{|\mathcal{I}|}\sum_{i\in\mathcal{I}}\left\|x_0 - \alpha(\zeta)g(x_0,i) - \beta(\zeta)\nabla f(x_0 - \alpha(\zeta)g(x_0,i))\right\|^2 + \beta(\zeta)^2\sigma^2\right) \notag \\
&\leq \sup \left( \tau(\zeta)\norm{x_0}^2+\frac{\mu(\zeta)}{|\mathcal{I}|}\sum_{i\in\mathcal{I}}\norm{g(x_0,i)-\nabla f(x_0)}^2 + \beta(\zeta)^2\sigma^2 \right) \notag \\
&\leq \tau(\zeta)R^2 + (\mu(\zeta) + \beta(\zeta)^2)\sigma^2, \label{eq:boundary-upper-raw}
\end{align}
where the supremum operation follows the same definition as the one used in~\eqref{eq:worst-case-error}.
Substituting~\eqref{eq:boundary-pair} into~\eqref{eq:boundary-upper-raw} gives $(\tau(\zeta),\mu(\zeta)+\beta(\zeta)^2) = (T(\zeta),U(\zeta))$ by~\eqref{eq:T}--\eqref{eq:U}, which proves~\eqref{eq:boundary-upper}.
\end{proof}

\subsection{Proof of Lemma~\ref{lem:minimization}}\label{sec:minimization}

Before proving Lemma~\ref{lem:minimization}, we establish properties of $h_\gamma(\zeta)=\gamma T(\zeta)+U(\zeta)$, a scaled version of the right-hand side of~\eqref{eq:boundary-upper}.
\begin{lemma}\label{lem:strict-chord}
Under Assumption~\ref{ass:oracle}, consider parameters $(M,R,\sigma)\in\R_{>1}\times\R_{\geq 0}\times\R_{>0}$ and use the notation in~\eqref{eq:worst-case-error}--\eqref{eq:optimal-pair}.
Define $h_\gamma(\zeta):=\gamma T(\zeta)+U(\zeta)$ for $1\le \zeta\le \hat{a}$. Then, for any $\zeta\in[1,\hat{a}]$,
\begin{align}
h_\gamma(\zeta)&\ge h_\gamma(1) \qquad \text{if } 0\le\gamma\le\gamma_\star, \label{eq:curve-left-minimum}\\
\text{and}\quad h_\gamma(\zeta)&\ge h_\gamma(\hat{a}) \qquad \text{if } \gamma\ge\gamma_\star. \label{eq:curve-right-minimum}
\end{align}
When $\gamma=\gamma_\star$, equalities in~\eqref{eq:curve-left-minimum}--\eqref{eq:curve-right-minimum} can only hold at $\zeta = 1$ and $\zeta = \hat{a}$, respectively.
Moreover,
\begin{align}
\gamma_\star(M-1)> \hat{a}(\hat{a}+1). \label{eq:gamma-star-derivative-bound}
\end{align}
\end{lemma}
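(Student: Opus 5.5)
The plan is to reduce every claim to a single sign condition at $\gamma=\gamma_\star$, using that $h_\gamma(\zeta)=\gamma T(\zeta)+U(\zeta)$ is affine in $\gamma$. Set $E(\zeta):=T(1)-T(\zeta)$. By \eqref{eq:T_decreasing}, $E(1)=0$ and $E(\zeta)>0$ for $\zeta>1$. By the definition \eqref{eq:gamma-star} of $\gamma_\star$ (equivalently, Lemma~\ref{lem:thresholds}), $h_{\gamma_\star}(1)=h_{\gamma_\star}(\hat a)$.

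\textbf{Key claim.} For every $\zeta\in(1,\hat a)$,
\begin{align*}
h_{\gamma_\star}(\zeta)>h_{\gamma_\star}(1)=h_{\gamma_\star}(\hat a).
\end{align*}

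\textbf{Reductions, assuming the key claim.}
\begin{enumerate}[label=\textup{(\alph*)},leftmargin=2em]
\item For $0\le\gamma\le\gamma_\star$ and $\zeta\in[1,\hat a]$,
\begin{align*}
h_\gamma(\zeta)-h_\gamma(1)=\bigl(h_{\gamma_\star}(\zeta)-h_{\gamma_\star}(1)\bigr)+(\gamma_\star-\gamma)E(\zeta)\ge0 .
\end{align*}
This gives \eqref{eq:curve-left-minimum}.
\item For $\gamma\ge\gamma_\star$,
\begin{align*}
h_\gamma(\zeta)-h_\gamma(\hat a)=\bigl(h_{\gamma_\star}(\zeta)-h_{\gamma_\star}(\hat a)\bigr)+(\gamma-\gamma_\star)\bigl(T(\zeta)-T(\hat a)\bigr)\ge0,
\end{align*}
because $T$ is decreasing on $[1,\hat a]$. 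This gives \eqref{eq:curve-right-minimum}.
\item At $\gamma=\gamma_\star$, the key claim is exactly the statement that equality holds only at the endpoints $\zeta\in\{1,\hat a\}$.
\end{enumerate}

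\textbf{Proving the key claim.} I would clear the common positive denominator $(M\zeta+1)^2(\zeta+M)^2$ and define the quartic
\begin{align*}
P(\zeta):=\gamma_\star(M-1)^4\zeta^2+(\zeta+1)^2\bigl(\eta^2\zeta^2+2M\zeta+1\bigr)-h_{\gamma_\star}(1)(M\zeta+1)^2(\zeta+M)^2 .
\end{align*}
By construction $P(1)=P(\hat a)=0$, so I can factor
\begin{align*}
P(\zeta)=(\zeta-1)(\hat a-\zeta)Q(\zeta)
\end{align*}
with $Q$ quadratic, and it suffices to show $Q>0$ on $[1,\hat a]$.

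To compute $Q$, I would use the quadratic relation satisfied by $\hat a$. From $M\hat a-(M-1)=\eta$ and $\eta^2=2M^2-2M+1$ one gets
\begin{align*}
M\hat a^2-2(M-1)\hat a-M=0 .
\end{align*}
This relation reduces every expression in $\hat a$, including $T(\hat a)$, $U(\hat a)$ and hence $\gamma_\star$, to the form $A+B\eta$ with $A,B$ rational in $t:=M-1$. Positivity of $Q$ on the interval then becomes a finite list of polynomial inequalities in $(t,\eta)$. A natural sufficient route is to check $Q(1)>0$ and $Q(\hat a)>0$ and then either convexity of $Q$ or the sign of its vertex.

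Each such inequality I would settle as in the proof of Lemma~\ref{lem:thresholds}. When a coefficient of $\eta$ is negative, I would use the sandwich
\begin{align*}
\sqrt2\,t<\eta<1+t+t^2 ,
\end{align*}
which follows from $\eta^2=2t^2+2t+1$. This replaces $\eta$ by a polynomial bound in the unfavourable direction and leaves a polynomial in $t>0$ with manifestly positive coefficients.

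\textbf{The bound \eqref{eq:gamma-star-derivative-bound}.} First I would try to obtain it as a by-product of the key claim. Since $h_{\gamma_\star}(\zeta)>h_{\gamma_\star}(\hat a)$ for $\zeta<\hat a$, we have $h_{\gamma_\star}'(\hat a)\le0$, and strictly so because $Q(\hat a)>0$. The plan is to compute $T'(\hat a)$ via \eqref{eq:T_decreasing} and $U'(\hat a)$ directly, simplify with the quadratic relation for $\hat a$, and check whether $h_{\gamma_\star}'(\hat a)<0$ rearranges to $\gamma_\star(M-1)>\hat a(\hat a+1)$. If it does not match, I would substitute the closed form of $\gamma_\star$ in $(t,\eta)$ and $\hat a=(t+\eta)/(t+1)$, clear denominators, and prove the resulting inequality with the same $\eta$-sandwich.

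\textbf{Main obstacle.} The hard step is the algebra of the factorization $P=(\zeta-1)(\hat a-\zeta)Q$ together with the certificate that $Q>0$. All coefficients involve the irrational $\eta$, and sign control needs the case split on the sign of the $\eta$-coefficient. I expect to need a computer-algebra check to find a clean form of $Q$.
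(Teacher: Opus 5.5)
Your proposal is correct and is essentially the paper's proof. The paper also reduces everything to $\gamma=\gamma_\star$ via affinity in $\gamma$ and the monotonicity of $T$. It writes $h_{\gamma_\star}(\zeta)-h_{\gamma_\star}(1)$ as $t(\zeta-1)\bigl((t+1)\zeta-t-\eta\bigr)r_t(\zeta)$ over a positive denominator, so your $Q$ is a positive multiple of $-r_t$. It signs that quadratic from its endpoint values plus a case split on its leading coefficient $c_{2,t}$, which really does change sign in $t$. Note that endpoint checks suffice only when $Q$ is \emph{concave}; in the convex case you need the vertex to lie left of $[1,\hat a]$, which the paper gets from $c_{1,t}>0$. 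Finally, the paper proves \eqref{eq:gamma-star-derivative-bound} exactly by your fallback substitution.

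Your first idea for that bound does not rearrange to it, but it implies it with one extra line. The quantity $h_\gamma'(\hat a)=\gamma T'(\hat a)+U'(\hat a)$ is strictly decreasing in $\gamma$. Using $M\hat a^2=2(M-1)\hat a+M$, at $\gamma=\hat a(\hat a+1)/(M-1)$ it equals $\frac{2(M-1)(\hat a+1)\bigl((M-1)\hat a^3+(3M-2)\hat a^2+2M\hat a+1\bigr)}{(M+\hat a)^3(M\hat a+1)^2}>0$. Hence your $h_{\gamma_\star}'(\hat a)<0$ forces $\gamma_\star(M-1)>\hat a(\hat a+1)$, without the paper's certificate $s_{\hat a}$.
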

\begin{proof}
Setting $t:=M-1>0$, we rewrite $\eta = \sqrt{2t^2+2t+1}$ and $\hat{a} = (t+\eta)/(t+1)$ by~\eqref{eq:basic-parameters}--\eqref{eq:thresholds}.
Define $r_t(\zeta):=c_{2,t}\zeta^2-c_{1,t}\zeta-c_{0,t}$ for any $\zeta\in[1,\hat{a}]$, where
\begin{align}
c_{2,t}&:=2\eta t^4-\eta t^3-9\eta t^2-12\eta t-4\eta +4t^5+6t^4+t^3-5t^2-2t, \notag\\
c_{1,t}&:=10\eta t^4+42\eta t^3+74\eta t^2+56\eta t+16\eta +12t^5+56t^4+112t^3+106t^2+48t+8, \notag\\
c_{0,t}&:=5\eta t^3+13\eta t^2+12\eta t+4\eta +6t^4+17t^3+17t^2+6t. \label{eq:chord-polynomial}
\end{align}
By~\eqref{eq:T}, \eqref{eq:U}, and~\eqref{eq:q-threshold}, with the knowledge of $\eta^2 = 2t^2+2t+1$, we have for any $\zeta\in[1,\hat{a}]$ that
\begin{align}
h_{\gamma_\star}(\zeta)-h_{\gamma_\star}(1)=\frac{t(\zeta-1)((t+1)\zeta-t-\eta)r_t(\zeta)}{(t+1)^2(t+\zeta+1)^2((t+1)\zeta+1)^2q(t,\eta)}. \label{eq:strict-chord-certificate}
\end{align}
The denominator is positive by the proof of Lemma~\ref{lem:thresholds}. Also,
\begin{align}\label{eq:rt_values}
r_t(1)=-8(t+1)^3(\eta t+3\eta+t^2+4t+1)<0 \quad \text{and} \quad
r_t(\hat{a})=-\frac{4p_{\hat{a}}(t,\eta)}{(t+1)^2}<0,
\end{align}
where
\begin{align*}
p_{\hat{a}}(t,\eta):={}&2\eta t^6+28\eta t^5+83\eta t^4+114\eta t^3+79\eta t^2+28\eta t+4\eta +3t^7+42t^6\notag\\
&+140t^5+228t^4+207t^3+110t^2+32t+4>0.
\end{align*}
If $c_{2,t}\le0$, then $c_{1,t}>0$ implies that $\nabla r_t(\zeta)=2c_{2,t}\zeta-c_{1,t}<0$ for any $\zeta\in[1,\hat{a}]$. If $c_{2,t}>0$, then $r_t(\cdot)$ is convex, so its maximum over $[1,\hat{a}]$ is attained at one of the endpoints. Thus, in either case,~\eqref{eq:rt_values} implies that $r_t(\zeta)<0$ for any $\zeta\in[1,\hat{a}]$. Together with $\hat{a} = (t+\eta)/(t+1)$ and $\zeta\in[1,\hat{a}]$,~\eqref{eq:strict-chord-certificate} gives
\begin{align}\label{eq:hgamma_prop}
h_{\gamma_\star}(\zeta)>h_{\gamma_\star}(1)=h_{\gamma_\star}(\hat{a})\quad \text{for any} \quad \zeta\in(1,\hat{a}).
\end{align}
Because $T(\zeta)$ is strictly decreasing on $[1,\hat{a}]$, the definition of $h_{\gamma}(\cdot)$ and~\eqref{eq:hgamma_prop} imply that, for any $\gamma \geq 0$ and $\zeta \in [1,\hat{a}]$,
\begin{align*}
h_{\gamma}(\zeta) - h_{\gamma}(1) &= h_{\gamma_{\star}}(\zeta) - h_{\gamma_{\star}}(1) + (\gamma - \gamma_{\star})(T(\zeta) - T(1)) \\
\text{and} \quad h_{\gamma}(\zeta) - h_{\gamma}(\hat{a}) &= h_{\gamma_{\star}}(\zeta) - h_{\gamma_{\star}}(\hat{a}) + (\gamma - \gamma_{\star})(T(\zeta) - T(\hat{a})),
\end{align*}
which proves \eqref{eq:curve-left-minimum}--\eqref{eq:curve-right-minimum} and their equality cases.
Finally, direct calculation gives
\begin{align}
\gamma_\star t-\hat{a}(\hat{a}+1)=\frac{s_{\hat{a}}(t,\eta)}{t^2(t+1)^3q(t,\eta)}, \label{eq:gamma-star-derivative-certificate}
\end{align}
where $q(t,\eta)$ is defined in~\eqref{eq:q-threshold} and
\begin{align*}
s_{\hat{a}}(t,\eta):={}&\eta t^7+52\eta t^6+318\eta t^5+729\eta t^4+864\eta t^3+576\eta t^2+208\eta t+32\eta +2t^8 \\
&+77t^7+492t^6+1270t^5+1791t^4+1528t^3+800t^2+240t+32>0,
\end{align*}
which proves~\eqref{eq:gamma-star-derivative-bound} and completes the proof.
\end{proof}

We are now ready to prove Lemma~\ref{lem:minimization}. In the proof, we first analyze $\min_{(\alpha,\beta)\in[0,1]\times[0,1]}\widehat L_\gamma(\alpha,\beta)$ and its corresponding optimal solution value, then further demonstrate that it's sufficient to consider $(\alpha,\beta)\in[0,1]\times[0,1]$ instead of $(\alpha,\beta)\in\R_{\geq 0}\times\R_{\geq 0}$ to conclude the statement.


\begin{proof}[Proof of Lemma~\ref{lem:minimization}]
We first try to analyze $\min_{(\alpha,\beta)\in Q}\widehat L_\gamma(\alpha,\beta)$, where $Q:=[0,1]\times[0,1]$, by partitioning $Q$ into different regions. The portion satisfying $\beta\leq\bar\alpha$ could be divided into two parts:
\begin{align}
Q_{00}:=[0,\bar\alpha]\times[0,\bar\alpha],\qquad \text{and} \qquad Q_{10}:=[\bar\alpha,1]\times[0,\bar\alpha].
\end{align}
On the other hand, the portion satisfying $\beta\ge\bar\alpha$ will be divided into three regions and analyzed later; see~\eqref{eq:three-upper-regions}. In this way, $Q = [0,1]\times[0,1]$ is divided into five different regions.

Let's analyze the minimization of $\widehat L_\gamma(\alpha,\beta)$ over $Q_{00}$ and $Q_{10}$ regions separately as follows.
\par\emph{Region $Q_{00}$.} When $(\alpha,\beta)\in Q_{00}$, we know $|M\alpha-1|\le1-\alpha$ and $|M\beta-1|\le1-\beta$. Consequently,~\eqref{eq:extended-envelope} turns to 
\begin{align}
\widehat L_\gamma(\alpha,\beta) = \gamma(1-\alpha)^2(1-\beta)^2 + \alpha^2(1-\beta)^2 + \beta^2. \label{eq:Lgamma_Q00}
\end{align}
Furthermore, because $(\alpha,\beta)$ are independent variables,
\begin{align}
\min_{(\alpha,\beta)\in Q_{00}}\widehat L_\gamma(\alpha,\beta)=\min_{0\le\beta\le\bar\alpha}\left\{\min_{0\le\alpha\le\bar\alpha}\widehat L_\gamma(\alpha,\beta)\right\}. \label{eq:nested-minimization}
\end{align}
For every admissible $\beta\in[0,\bar\alpha]$, it follows~\eqref{eq:Lgamma_Q00} that
\begin{align}
\frac{\partial \widehat L_\gamma(\alpha,\beta)}{\partial\alpha}=2(1-\beta)^2\bigl((1+\gamma)\alpha-\gamma\bigr)\qquad \text{and} \qquad \frac{\partial^2\widehat L_\gamma(\alpha,\beta)}{\partial\alpha^2}=2(1+\gamma)(1-\beta)^2>0, \label{eq:alpha-inner-derivatives}
\end{align}
resulting in the unique minimizer of the inner minimization of~\eqref{eq:nested-minimization} always being
\begin{align}
\alpha_0(\gamma):=\min\left\{\frac{\gamma}{1+\gamma},\bar\alpha\right\}. \label{eq:alpha-profile-minimizer}
\end{align}
Putting~\eqref{eq:alpha-profile-minimizer} back to~\eqref{eq:nested-minimization} turns the problem to
\begin{align*}
\min_{(\alpha,\beta)\in Q_{00}}\widehat L_\gamma(\alpha,\beta) = \min_{0\leq \beta\leq\bar\alpha} \gamma(1-\alpha_0(\gamma))^2(1-\beta)^2 + \alpha_0(\gamma)^2(1-\beta)^2 + \beta^2,
\end{align*}
which is minimizing a strongly convex function of $\beta$, resulting in the unique minimizer of $\beta\in [0,\bar\alpha]$ as
\begin{align}
\beta_0(\gamma):=\min\left\{\frac{\gamma(1-\alpha_0(\gamma))^2+\alpha_0(\gamma)^2}{1+\gamma(1-\alpha_0(\gamma))^2+\alpha_0(\gamma)^2},\bar\alpha\right\}. \label{eq:beta-profile-minimizer}
\end{align}
Therefore, $(\alpha_0(\gamma),\beta_0(\gamma))$ is the unique minimizer of $\widehat L_\gamma(\alpha,\beta)$ over $Q_{00}$, which are functions of $\gamma \geq 0$. When $\gamma$ changes its value in $[0,\infty)$, different terms in~\eqref{eq:alpha-profile-minimizer}--\eqref{eq:beta-profile-minimizer} become active, leading to
\begin{equation}\label{eq:q00-argmin}
\begin{aligned}
\operatorname*{argmin}_{(\alpha,\beta)\in Q_{00}}\widehat L_\gamma(\alpha,\beta)&=
\begin{cases}
\left\{\left(\dfrac{\gamma}{1+\gamma},\dfrac{\gamma}{1+2\gamma}\right)\right\}, & 0\le\gamma\le\gamma_1,\\
\left\{\left(\bar\alpha,\dfrac{\gamma(M-1)^2+4}{\gamma(M-1)^2+4+(M+1)^2}\right)\right\}, & \gamma_1\le\gamma\le\gamma_2,\\
\{(\bar\alpha,\bar\alpha)\}, & \gamma\ge\gamma_2,
\end{cases} \\
\text{and}\quad \min_{(\alpha,\beta)\in Q_{00}}\widehat L_\gamma(\alpha,\beta)&=
\begin{cases}
\dfrac{\gamma}{1+2\gamma}, & 0\le\gamma\le\gamma_1,\\
\dfrac{\gamma(M-1)^2+4}{\gamma(M-1)^2+4+(M+1)^2}, & \gamma_1\le\gamma\le\gamma_2,\\
\gamma T(1)+U(1), & \gamma\ge\gamma_2,
\end{cases}
\end{aligned}
\end{equation}
following definitions in~\eqref{eq:basic-parameters}--\eqref{eq:U}. This completes the analysis in the region $Q_{00}$.
\par\emph{Region $Q_{10}$.} When $(\alpha,\beta)\in Q_{10}$, by $|M\alpha-1|\geq 1-\alpha$ and $|M\beta-1|\le 1-\beta$, \eqref{eq:extended-envelope} turns to
\begin{align}
\widehat L_\gamma(\alpha,\beta) = \gamma(M\alpha - 1)^2(1-\beta)^2 + \alpha^2(1-\beta)^2 + \beta^2 \label{eq:alpha-above-bar}
\end{align}
We notice that, for any fixed $\beta\in[0,\bar\alpha]$, $\widehat L_\gamma(\alpha,\beta)$ is a strictly increasing function of $\alpha$ over $[\bar\alpha,1]$.
Therefore, we can remove the variable $\alpha$ by
\begin{align*}
\min_{(\alpha,\beta)\in Q_{10}}\widehat L_\gamma(\alpha,\beta)=\min_{0\le\beta\le\bar\alpha} \gamma(M\bar\alpha - 1)^2(1-\beta)^2 + \bar\alpha^2(1-\beta)^2 + \beta^2, 
\end{align*}
turning to minimize a strictly convex function of $\beta$ over $[0,\bar\alpha]$. This further results in the unique minimizer of $\beta\in[0,\bar\alpha]$ as 
\begin{align}
\beta_1(\gamma):=\min\left\{\frac{\gamma(M\bar\alpha -1)^2+\bar\alpha^2}{1+\gamma(M\bar\alpha - 1)^2+\bar\alpha^2},\bar\alpha\right\} = \min\left\{\frac{\gamma(M-1)^2+4}{\gamma(M-1)^2+4 + (M+1)^2},\bar\alpha\right\}, \label{eq:beta_minimizer_Q10}
\end{align}
which is a function of $\gamma \geq 0$. When $\gamma$ changes its value in $[0,\infty)$, different terms in~\eqref{eq:beta_minimizer_Q10} become active, leading to
\begin{equation}\label{eq:Q10-minimum}
\begin{aligned}
\operatorname*{argmin}_{(\alpha,\beta)\in Q_{10}}\widehat L_\gamma(\alpha,\beta) &=
\begin{cases}
\left\{\left(\bar\alpha,\dfrac{\gamma(M-1)^2+4}{\gamma(M-1)^2+4 + (M+1)^2}\right)\right\}, & 0\le\gamma\le\gamma_2,\\
\{(\bar\alpha,\bar\alpha)\}, & \gamma\ge\gamma_2,
\end{cases} \\
\text{and}\quad \min_{(\alpha,\beta)\in Q_{10}}\widehat L_\gamma(\alpha,\beta)&= \begin{cases}
\dfrac{\gamma(M-1)^2+4}{\gamma(M-1)^2+4 + (M+1)^2}, & 0\le\gamma\le\gamma_2,\\
\gamma T(1)+U(1), & \gamma\ge\gamma_2,
\end{cases}
\end{aligned}
\end{equation}
following definitions in~\eqref{eq:basic-parameters}--\eqref{eq:U}. Combining~\eqref{eq:q00-argmin} and~\eqref{eq:Q10-minimum}, one notice that the minimizer over $Q_{00}$ outperforms the minimizer over $Q_{10}$ when $\gamma < \gamma_1$, and the same optimal values are provided when $\gamma \geq \gamma_1$. Therefore, we have the minimization of $\widehat L_{\gamma}(\alpha,\beta)$ over the region $Q_{00}\cup Q_{10}$ as

\begin{equation}\label{eq:lower-half-minimum}
\begin{aligned}
\operatorname*{argmin}_{(\alpha,\beta)\in [0,1]\times[0,\bar\alpha]}\widehat L_\gamma(\alpha,\beta)&=
\begin{cases}
\left\{\left(\dfrac{\gamma}{1+\gamma},\dfrac{\gamma}{1+2\gamma}\right)\right\}, & 0\le\gamma\le\gamma_1,\\
\left\{\left(\bar\alpha,\dfrac{\gamma(M-1)^2+4}{\gamma(M-1)^2+4+(M+1)^2}\right)\right\}, & \gamma_1\le\gamma\le\gamma_2,\\
\{(\bar\alpha,\bar\alpha)\}, & \gamma\ge\gamma_2,
\end{cases} \\
\text{and}\quad \min_{(\alpha,\beta)\in [0,1]\times[0,\bar\alpha]}\widehat L_\gamma(\alpha,\beta)&=
\begin{cases}
\dfrac{\gamma}{1+2\gamma}, & 0\le\gamma\le\gamma_1,\\
\dfrac{\gamma(M-1)^2+4}{\gamma(M-1)^2+4+(M+1)^2}, & \gamma_1\le\gamma\le\gamma_2,\\
\gamma T(1)+U(1), & \gamma\ge\gamma_2,
\end{cases}
\end{aligned}
\end{equation}
following definitions in~\eqref{eq:basic-parameters}--\eqref{eq:U}.

\par\emph{Region $(\alpha,\beta)\in[0,1]\times[\bar\alpha,1]$.} 
We now minimize $\widehat L_\gamma(\alpha,\beta)$ over $(\alpha,\beta)\in [0,1]\times[\bar\alpha,1]$. We first show that it's sufficient to consider $\min_{(\alpha,\beta)\in [0,1)\times[\bar\alpha,1)}$ instead of $\min_{(\alpha,\beta)\in [0,1]\times[\bar\alpha,1]}$ by analyzing over the regions of $(\alpha,\beta)\in[\bar\alpha,1]\times[\bar\alpha,1]$ and $(\alpha,\beta)\in [0,\bar\alpha]\times \left[\frac{2+\bar\alpha(M-1)}{M+1+\bar\alpha(M-1)},1\right]$.

For any $(\alpha,\beta)\in[\bar\alpha,1]\times[\bar\alpha,1]$, direct substitution into~\eqref{eq:extended-envelope} gives
\begin{align*}
\widehat L_\gamma(\alpha,\beta)=\gamma(M\alpha-1)^2(M\beta-1)^2 + \alpha^2(M\beta-1)^2 + \beta^2,
\end{align*}
whose partial derivatives are
\begin{align*}
\frac{\partial \widehat L_\gamma(\alpha,\beta)}{\partial \alpha} &= 2(M\beta-1)^2\bigl(\gamma M(M\alpha-1)+\alpha\bigr)>0 \\ 
\text{and} \quad \frac{\partial \widehat L_\gamma(\alpha,\beta)}{\partial \beta} &= 2M\gamma(M\alpha-1)^2(M\beta-1) + 2M\alpha^2(M\beta-1) + 2\beta > 0
\end{align*}
for any $(\alpha,\beta)\in[\bar\alpha,1]\times[\bar\alpha,1]$. 
Hence, 
\begin{align}\label{eq:L_boundary_case1}
\min_{(\alpha,\beta)\in [\bar\alpha,1]\times[\bar\alpha,1]}\widehat L_{\gamma}(\alpha,\beta) = \min_{(\alpha,\beta)\in [\bar\alpha,1)\times[\bar\alpha,1)}\widehat L_{\gamma}(\alpha,\beta). 
\end{align}
Next, for any $(\alpha,\beta)\in [0,\bar\alpha]\times \left[\frac{2+\bar\alpha(M-1)}{M+1+\bar\alpha(M-1)},1\right]$, direct substitution into~\eqref{eq:extended-envelope} gives
\begin{align*}
\widehat L_\gamma(\alpha,\beta)=\left(\gamma\max\left\{(M\alpha-1)^2,\left(\frac{1-\alpha}{1+\alpha(M-1)}\right)^2\right\}+\alpha^2\right)(M\beta-1)^2+\beta^2,
\end{align*}
whose partial derivative with respect to $\beta$ is strictly positive in the region of $(\alpha,\beta)\in [0,\bar\alpha]\times \left[\frac{2+\bar\alpha(M-1)}{M+1+\bar\alpha(M-1)},1\right]$. Therefore, 
\begin{align}\label{eq:L_boundary_case2}
\min_{(\alpha,\beta)\in [0,\bar\alpha]\times\left[\frac{2+\bar\alpha(M-1)}{M+1+\bar\alpha(M-1)},1\right]}\widehat L_{\gamma}(\alpha,\beta) = \min_{(\alpha,\beta)\in [0,\bar\alpha]\times\left[\frac{2+\bar\alpha(M-1)}{M+1+\bar\alpha(M-1)},1\right)}\widehat L_{\gamma}(\alpha,\beta).
\end{align}
Combining~\eqref{eq:L_boundary_case1} and~\eqref{eq:L_boundary_case2}, it follows that
\begin{align}
\min_{(\alpha,\beta)\in[0,1]\times[\bar\alpha,1]}\widehat L_\gamma(\alpha,\beta)=\min_{(\alpha,\beta)\in[0,1)\times[\bar\alpha,1)}\widehat L_\gamma(\alpha,\beta), \label{eq:upper-edge-reduction}
\end{align}
and the two problems have the same minimizers.
\par It remains to analyze $\min_{(\alpha,\beta)\in[0,1)\times[\bar\alpha,1)}\widehat L_\gamma(\alpha,\beta)$. On this region, let's introduce the changes of variables
\begin{align}
\chi_\alpha:=\frac{M\alpha-1}{1-\alpha} \quad \text{and} \quad \chi_\beta:=\frac{M\beta-1}{1-\beta}. \label{eq:upper-transform}
\end{align}
This maps $(\alpha,\beta)\in[0,1)\times[\bar\alpha,1)$ bijectively to $(\chi_\alpha,\chi_\beta)\in[-1,+\infty)\times[1,+\infty)$, with inverse
\begin{align}
\alpha=\frac{\chi_\alpha+1}{M+\chi_\alpha}\quad \text{and} \quad \beta=\frac{\chi_\beta+1}{M+\chi_\beta}. \label{eq:upper-transform-inverse}
\end{align}
By~\eqref{eq:r12}, \eqref{eq:r34}, \eqref{eq:upper-transform-inverse}, and $\chi_\beta \geq 1$, we have
\begin{align}
\frac{r_2(\alpha,\beta)}{r_1(\alpha,\beta)}=\chi_\beta|\chi_\alpha| \geq |\chi_\alpha| = \frac{r_3(\alpha,\beta)}{r_1(\alpha,\beta)}\quad \text{and} \quad
\frac{r_4(\alpha,\beta)}{r_1(\alpha,\beta)}=\frac{\chi_\beta(M+\chi_\alpha)}{M\chi_\alpha+2M-1}, \label{eq:normalized-upper-factors}
\end{align}
where $M\chi_\alpha+2M-1\ge M-1>0$. Direct substitution of~\eqref{eq:normalized-upper-factors} into~\eqref{eq:extended-envelope} gives that for any $(\chi_\alpha,\chi_\beta)\in[-1,+\infty)\times[1,+\infty)$,
\begin{align}
\ell_\gamma(\chi_\alpha,\chi_\beta)&:=\widehat L_\gamma\left(\frac{\chi_\alpha+1}{M+\chi_\alpha},\frac{\chi_\beta+1}{M+\chi_\beta}\right)\notag\\
&=\left(\frac{\chi_\beta+1}{M+\chi_\beta}\right)^2+\frac{(M-1)^2}{(M+\chi_\beta)^2(M+\chi_\alpha)^2}\biggl[\gamma(M-1)^2\max\biggl\{1,\chi_\beta^2\chi_\alpha^2,\notag\\
&\hspace{15em}\left(\frac{\chi_\beta(M+\chi_\alpha)}{M\chi_\alpha+2M-1}\right)^2\biggr\}+\chi_\beta^2(\chi_\alpha+1)^2\biggr]. \label{eq:transformed-upper-objective}
\end{align}
We next identify which of the three terms in the maximum operation is active. If $-1\le\chi_\alpha\le0$, then
\begin{align}
M+\chi_\alpha+\chi_\alpha(M\chi_\alpha+2M-1)=M(\chi_\alpha+1)^2\ge0, \label{eq:negative-factor-comparison}
\end{align}
and meanwhile, if $\chi_\alpha\ge0$ then by~\eqref{eq:basic-parameters}--\eqref{eq:thresholds},
\begin{align}
\frac{M+\chi_\alpha}{M\chi_\alpha+2M-1}\ge\chi_\alpha
\quad\Longleftrightarrow\quad
M\chi_\alpha^2+2(M-1)\chi_\alpha-M\le0
\quad\Longleftrightarrow\quad
0\le\chi_\alpha\le\frac1{\hat a}. \label{eq:positive-factor-comparison}
\end{align}
Consequently, by~\eqref{eq:normalized-upper-factors}, $\frac{r_4(\alpha,\beta)}{r_1(\alpha,\beta)} \geq \frac{r_2(\alpha,\beta)}{r_1(\alpha,\beta)}$ when $-1\le\chi_\alpha\le1/\hat a$, whereas $\frac{r_4(\alpha,\beta)}{r_1(\alpha,\beta)} \leq \frac{r_2(\alpha,\beta)}{r_1(\alpha,\beta)}$ when $\chi_\alpha\ge 1/\hat a$. For any fixed $\chi_\beta\in[1,+\infty)$, the $\frac{r_4(\alpha,\beta)}{r_1(\alpha,\beta)}$ quantity (see~\eqref{eq:normalized-upper-factors}) satisfies
\begin{align}
\frac{\partial}{\partial\chi_\alpha}\left(\frac{\chi_\beta(M+\chi_\alpha)}{M\chi_\alpha+2M-1}\right)=-\frac{\chi_\beta(M-1)^2}{(M\chi_\alpha+2M-1)^2}<0, \label{eq:r4-normalized-monotonicity}
\end{align}
whereas the $\frac{r_2(\alpha,\beta)}{r_1(\alpha,\beta)}$ quantity is strictly increasing as $\chi_{\alpha}$ increases within $[1/\hat a,+\infty)$. Thus, as $\chi_\alpha$ increases within $[-1,+\infty)$, the active quantity within the maximum operation in~\eqref{eq:transformed-upper-objective} changes in the order $r_4(\alpha,\beta) \rightarrow r_1(\alpha,\beta)\rightarrow r_2(\alpha,\beta)$, generating the following transformed domain:
\begin{align}
\Omega_4&:=\left\{(\chi_\alpha,\chi_\beta):\chi_\beta\ge1,-1\le\chi_\alpha\le\frac1{\hat a},\ \chi_\beta(M+\chi_\alpha)\ge M\chi_\alpha+2M-1\right\},\notag\\
\Omega_1&:=\left\{(\chi_\alpha,\chi_\beta):\chi_\beta\ge1,\chi_\alpha\ge-1,\ \chi_\beta(M+\chi_\alpha)\le M\chi_\alpha+2M-1,\ \chi_\beta|\chi_\alpha|\le1\right\}, \label{eq:three-upper-regions}\\
\text{and} \quad \Omega_2&:=\left\{(\chi_\alpha,\chi_\beta):\chi_\beta\ge1,\chi_\alpha\ge\frac1{\hat a},\ \chi_\beta\chi_\alpha\ge1\right\}.\notag
\end{align}
The regions share boundary points when same values are attained by both component functions. In particular, $\Omega_4$ and $\Omega_1$ meet where $\frac{r_4(\alpha,\beta)}{r_1(\alpha,\beta)} = 1$, while $\Omega_1$ and $\Omega_2$ meet where $\frac{r_2(\alpha,\beta)}{r_1(\alpha,\beta)} = 1$. These overlaps are intentional and ensure that the three regions cover the entire transformed domain without gaps. 

\par We first analyze $\min_{(\chi_\alpha,\chi_\beta)\in\Omega_4}\ell_\gamma(\chi_\alpha,\chi_\beta)$ in~\eqref{eq:transformed-upper-objective}. It follows~\eqref{eq:transformed-upper-objective} and~\eqref{eq:three-upper-regions} that for every $(\chi_\alpha,\chi_\beta)\in\Omega_4$,
\begin{align}
\ell_\gamma(\chi_\alpha,\chi_\beta)=\frac{(\chi_\beta+1)^2+\chi_\beta^2(M-1)^2\left(\dfrac{\gamma(M-1)^2}{(M\chi_\alpha+2M-1)^2}+\dfrac{(\chi_\alpha+1)^2}{(M+\chi_\alpha)^2}\right)}{(M+\chi_\beta)^2}, \label{eq:rational-branch-form}
\end{align}
whose partial derivative with respect to $\chi_\beta$ is
\begin{align*}
\frac{\partial \ell_\gamma(\chi_\alpha,\chi_\beta)}{\partial \chi_\beta} = \frac{2(M-1)(\chi_\beta + M)\left(\chi_\beta + 1 + \chi_\beta M(M-1)\left(\dfrac{\gamma(M-1)^2}{(M\chi_\alpha+2M-1)^2}+\dfrac{(\chi_\alpha+1)^2}{(M+\chi_\alpha)^2}\right)\right)}{(M+\chi_\beta)^4} > 0.
\end{align*}
By the definition of $\Omega_4$ defined in~\eqref{eq:three-upper-regions}, we know that the smallest $\chi_\beta$ allowed by $\Omega_4$ is
\begin{align}
\chi_\beta=\frac{M\chi_\alpha+2M-1}{M+\chi_\alpha}\ge1, \label{eq:rational-boundary-chi-beta}
\end{align}
where the inequality follows $\chi_\alpha \geq -1$ and $M>1$. Moreover, because $\frac{r_4(\alpha,\beta)}{r_1(\alpha,\beta)} \geq \frac{r_2(\alpha,\beta)}{r_1(\alpha,\beta)}$ when $-1\le\chi_\alpha\le1/\hat a$, it follows~\eqref{eq:normalized-upper-factors} and~\eqref{eq:rational-boundary-chi-beta} that 
\begin{align*}
\chi_\beta|\chi_\alpha| = \frac{r_2(\alpha,\beta)}{r_1(\alpha,\beta)} \leq \frac{r_4(\alpha,\beta)}{r_1(\alpha,\beta)} = \frac{\chi_\beta(M+\chi_\alpha)}{M\chi_\alpha + 2M - 1} = 1,
\end{align*}
implying that $\operatorname*{argmin}_{(\chi_\alpha,\chi_\beta)\in \Omega_4}\ell_\gamma(\chi_\alpha,\chi_\beta) \subseteq \Omega_1$. Therefore,
\begin{align}
\operatorname*{argmin}_{(\chi_\alpha,\chi_\beta)\in \Omega_4}\ell_\gamma(\chi_\alpha,\chi_\beta) \subseteq \Omega_1 \quad \text{and} \quad \min_{(\chi_\alpha,\chi_\beta)\in\Omega_4}\ell_\gamma(\chi_\alpha,\chi_\beta)=\min_{(\chi_\alpha,\chi_\beta)\in\Omega_4\cap\Omega_1}\ell_\gamma(\chi_\alpha,\chi_\beta). \label{eq:rational-reduction}
\end{align}
\par We next analyze $\min_{(\chi_\alpha,\chi_\beta)\in\Omega_2}\ell_\gamma(\chi_\alpha,\chi_\beta)$ in~\eqref{eq:transformed-upper-objective}. It follows~\eqref{eq:transformed-upper-objective} and~\eqref{eq:three-upper-regions} that for every $(\chi_\alpha,\chi_\beta)\in\Omega_2$,
\begin{align}
\ell_\gamma(\chi_\alpha,\chi_\beta)=\left(\frac{\chi_\beta+1}{M+\chi_\beta}\right)^2+\frac{(M-1)^2\chi_\beta^2\bigl(\gamma(M-1)^2\chi_\alpha^2+(\chi_\alpha+1)^2\bigr)}{(M+\chi_\beta)^2(M+\chi_\alpha)^2}, \label{eq:absolute-branch-form}
\end{align}
whose partial derivative with respect to $\chi_\alpha$ is
\begin{align*}
\frac{\partial \ell_\gamma(\chi_\alpha,\chi_\beta)}{\partial \chi_\alpha} = \frac{(M-1)^2\chi_{\beta}^2}{(M+\chi_\beta)^2}\cdot\frac{2(M-1)(\gamma(M-1)M\chi_{\alpha} + \chi_\alpha + 1)}{(M+\chi_\alpha)^3} > 0
\end{align*}
by $\chi_\alpha\ge1/\hat a>0$ and $M>1$. Following the definition of $\Omega_2$ defined in~\eqref{eq:three-upper-regions}, the smallest $\chi_\alpha$ allowed by $\Omega_2$ is $1/\min\{\hat{a},\chi_\beta\}$.
If $\chi_\beta\ge\hat a$, then $\chi_\alpha=1/\hat a$ and~\eqref{eq:absolute-branch-form} has the same strictly increasing dependence on $\chi_\beta$ as~\eqref{eq:rational-branch-form}, so the minimum solution of $(\chi_\alpha,\chi_\beta)\in\Omega_2$ is attained at $(1/\hat{a},\hat{a})$. Therefore, we have
\begin{align*}
\min_{(\chi_\alpha,\chi_\beta)\in\Omega_2}\ell_\gamma(\chi_\alpha,\chi_\beta)=\min_{\chi_\beta\in[1,\hat a]}\ell_\gamma\left(1/\chi_\beta,\chi_\beta\right). 
\end{align*}
For any $\chi_\beta\in[1,\hat a]$, by~\eqref{eq:basic-parameters}--\eqref{eq:thresholds}, $(1/\chi_\beta,\chi_\beta)\in\Omega_1\cap\Omega_2$. Combining this with~\eqref{eq:rational-reduction} results in 
\begin{align}\label{eq:all_to_Omega1}
\operatorname*{argmin}_{(\chi_\alpha,\chi_\beta)\in \Omega_2\cup\Omega_4}\ell_\gamma(\chi_\alpha,\chi_\beta) \subseteq \Omega_1\quad\text{and}\quad\min_{(\chi_\alpha,\chi_\beta)\in\Omega_2\cup\Omega_4}\ell_\gamma(\chi_\alpha,\chi_\beta)=\min_{(\chi_\alpha,\chi_\beta)\in(\Omega_2\cup\Omega_4)\cap\Omega_1}\ell_\gamma(\chi_\alpha,\chi_\beta),
\end{align}
motivating us to focus on the region $\Omega_1$.
\par We now show that any $(\chi_\alpha,\chi_\beta)\in\Omega_1$ satisfies $1\le\chi_\beta\le\hat a$. If $(\chi_\alpha,\chi_\beta)\in\Omega_1$ and $\chi_\alpha\le1/\hat a$, then by~\eqref{eq:basic-parameters} and~\eqref{eq:three-upper-regions} that
\begin{align}
\chi_\beta\le\frac{M\chi_\alpha+2M-1}{M+\chi_\alpha}\le\frac{M/\hat a+2M-1}{M+1/\hat a}=\hat a. \label{eq:omega-one-chi-beta-bound}
\end{align}
On the other hand, if $\chi_\alpha\ge1/\hat a$, then $\chi_\beta|\chi_\alpha|\le1$ gives $\chi_\beta\le1/\chi_\alpha\le\hat a$, demonstrating $\chi_\beta\in[1,\hat{a}]$. Moreover, every $(\chi_\alpha,\chi_\beta)\in\Omega_1$ satisfies $\chi_\alpha\le1/\chi_\beta$, and $(\chi_\beta,1/\chi_\beta)\in\Omega_1$ for every $1\le\chi_\beta\le\hat a$. These observations identify the range and the boundary of $\Omega_1$ needed below.
\par Combining~\eqref{eq:upper-edge-reduction} and~\eqref{eq:all_to_Omega1} indicates that
\begin{equation}\label{eq:upper-reduction-to-active-r1}
\begin{aligned}
\min_{(\alpha,\beta)\in[0,1]\times[\bar\alpha,1]}\widehat L_\gamma(\alpha,\beta)&=\min_{(\alpha,\beta)\in[0,1)\times[\bar\alpha,1)}\widehat L_\gamma(\alpha,\beta)=\min_{(\chi_\alpha,\chi_\beta)\in\Omega_1}\ell_\gamma(\chi_\alpha,\chi_\beta), \\
\text{and} \quad \operatorname*{argmin}_{(\chi_\alpha,\chi_\beta)\in \Omega_1\cup\Omega_2\cup\Omega_4}&\ell_\gamma(\chi_\alpha,\chi_\beta) \subseteq \Omega_1.
\end{aligned}
\end{equation}
\par\emph{Minimization on $\Omega_1$.} Now let's analyze $\min_{(\chi_\alpha,\chi_\beta)\in\Omega_1}\ell_\gamma(\chi_\alpha,\chi_\beta)$ in~\eqref{eq:transformed-upper-objective}. It follows~\eqref{eq:transformed-upper-objective} and~\eqref{eq:three-upper-regions} that for every $(\chi_\alpha,\chi_\beta)\in\Omega_1$,
\begin{align}
\ell_\gamma(\chi_\alpha,\chi_\beta)=\left(\frac{\chi_\beta+1}{M+\chi_\beta}\right)^2+\frac{(M-1)^2\left(\gamma(M-1)^2+\chi_\beta^2(\chi_\alpha+1)^2\right)}{(M+\chi_\beta)^2(M+\chi_\alpha)^2}. \label{eq:omega-one-objective}
\end{align}
We now determine both its minimizers and its minimum value. 

First, let's suppose $0\le\gamma\le\gamma_1$. For fixed $\chi_\beta\in[1,\hat a]$, the admissible values of $\chi_\alpha$ determined by $(\chi_\alpha,\chi_\beta)\in\Omega_1$ form a subset of $[-1,\infty)$. Dropping all constraints except $\chi_\alpha\ge-1$ therefore enlarges the feasible set and gives 
\begin{align*}
\min_{\chi_\alpha:(\chi_\alpha,\chi_\beta)\in\Omega_1}\ell_\gamma(\chi_\alpha,\chi_\beta)
&\ge\min_{\chi_\alpha\ge-1}\left\{\left(\frac{\chi_\beta+1}{M+\chi_\beta}\right)^2+\frac{(M-1)^2\left(\gamma(M-1)^2+\chi_\beta^2(\chi_\alpha+1)^2\right)}{(M+\chi_\beta)^2(M+\chi_\alpha)^2}\right\}=:\underline\ell_\gamma(\chi_\beta).
\end{align*}
The derivative of the only term that depends on $\chi_\alpha$ is
\begin{align}\label{eq:partial_derivative_alpha}
\frac{\partial}{\partial\chi_\alpha}\left(\frac{\gamma (M-1)^2+\chi_\beta^2(\chi_\alpha+1)^2}{(M+\chi_\alpha)^2}\right)
=\frac{2(M-1)\left(\chi_\beta^2(\chi_\alpha+1)-\gamma (M-1)\right)}{(M+\chi_\alpha)^3},
\end{align}
which is negative if $\chi_\alpha \in [-1,\gamma (M - 1)/\chi_\beta^2-1)$ and non-negative otherwise. We also know that 
\begin{equation}\label{eq:unique_minimizer_relaxed}
\chi_\alpha = \gamma (M-1)/\chi_\beta^2-1    
\end{equation}
is the unique minimizer of the relaxed problem, which further implies
\begin{align*}
\underline\ell_\gamma(\chi_\beta)=\frac{(\chi_\beta+1)^2}{(M+\chi_\beta)^2}+\frac{(M-1)^2\gamma\chi_\beta^2}{(M+\chi_\beta)^2(\gamma+\chi_\beta^2)}.
\end{align*}
Direct differentiation gives
\begin{align*}
\nabla \underline\ell_\gamma(\chi_\beta)=\frac{2(M-1)\left[\chi_\beta^4\bigl(\chi_\beta+1-(M-1)\gamma\bigr)+2\gamma\chi_\beta^2(\chi_\beta+1)+\gamma^2\bigl((M^2-M+1)\chi_\beta+1\bigr)\right]}{(\gamma+\chi_\beta^2)^2(M+\chi_\beta)^3}>0,
\end{align*}
because $\gamma\in[0,\gamma_1] = [0,2/(M-1)]$ (see~\eqref{eq:basic-parameters}) and $\chi_\beta \geq 1$.
Therefore, $\underline\ell_\gamma(\cdot)$ is strictly increasing on $[1,\hat a]$. It follows that every $(\chi_\alpha, \chi_\beta)\in\Omega_1$ satisfies
\begin{align}
\ell_\gamma(\chi_\alpha, \chi_\beta)\ge\underline\ell_\gamma(\chi_\beta)\ge\underline\ell_\gamma(1). \label{eq:omega-one-small-gamma-lower}
\end{align}
When $\chi_\beta=1$,~\eqref{eq:unique_minimizer_relaxed} becomes $\chi_\alpha=(M-1)\gamma-1$. Since $\gamma\in[0,2/(M-1)]$, we have $\chi_\alpha\in[-1,1]$. Moreover, it follows from $M+\chi_\alpha\le M\chi_\alpha+2M-1
\Longleftrightarrow
(M-1)(\chi_\alpha+1)\ge0$,
and $\lvert\chi_\alpha\rvert\le1$ that $((M-1)\gamma-1,1)\in\Omega_1$, then~\eqref{eq:omega-one-small-gamma-lower} is satisfied by $((M-1)\gamma-1,1)$. Consequently,
\begin{align}
\operatorname*{argmin}_{(\chi_\alpha, \chi_\beta)\in\Omega_1}\ell_\gamma(\chi_\alpha, \chi_\beta)=\left\{\bigl((M-1)\gamma-1,1\bigr)\right\}.  \label{eq:omega-one-smallgamma-argmin}
\end{align}
By~\eqref{eq:upper-transform-inverse} and~\eqref{eq:omega-one-objective}, we have the corresponding stepsize pair as 
\begin{align}
(\alpha,\beta)&=\left(\frac{\gamma}{1+\gamma},\bar\alpha\right), \quad \text{and} \quad \min_{(\chi_\alpha, \chi_\beta)\in\Omega_1}\ell_\gamma(\chi_\alpha, \chi_\beta)&=\frac{4}{(M+1)^2}+\left(\frac{M-1}{M+1}\right)^2\frac{\gamma}{1+\gamma}. \label{eq:omega-one-small-gamma}
\end{align}
\par We next consider $\gamma=\gamma_\star$. In the remainder of this proof, by~\eqref{eq:T}--\eqref{eq:U}, for any $\gamma\geq0$, let's define
\begin{align}
h_\gamma(\zeta):=\gamma T(\zeta)+U(\zeta) \ \ \text{for any} \ \ \zeta\in[1,\hat a]. \label{eq:omega-one-h-definition}
\end{align}
For any fixed $\chi_\beta\in[1,\hat{a}]$, following the same logic as~\eqref{eq:partial_derivative_alpha}, the partial derivative of $\ell_{\gamma_\star}(\chi_\alpha,\chi_\beta)$ with respect to $\chi_\alpha$ has the same sign as
\begin{align}
\chi_\beta^2(\chi_\alpha+1)-\gamma_\star(M-1). \label{eq:omega-one-chi-alpha-derivative}
\end{align}
By $\chi_\beta\in[1,\hat{a}]$ and $\chi_\beta|\chi_\alpha|\le1$ (see~\eqref{eq:three-upper-regions}), we know that
\begin{align}
\chi_\beta^2(\chi_\alpha+1)\le\chi_\beta(\chi_\beta+1)\le\hat a(\hat a+1)<\gamma_\star(M-1), \label{eq:omega-one-decreases-at-star}
\end{align}
where the last inequality is~\eqref{eq:gamma-star-derivative-bound} in the statement of Lemma~\ref{lem:strict-chord}. It follows from~\eqref{eq:omega-one-chi-alpha-derivative}--\eqref{eq:omega-one-decreases-at-star} that, for each fixed $\chi_\beta\in[1,\hat{a}]$, the $\ell_{\gamma_\star}(\chi_\alpha,\chi_\beta)$ value is strictly decreasing in $\chi_\alpha$ throughout its feasible interval. By direct calculations based on~\eqref{eq:three-upper-regions}, we know that, for every $\chi_\beta\in[1,\hat{a}]$, $1/\chi_\beta$ is the largest value that $\chi_\alpha$ can take to ensure $(\chi_\alpha,\chi_\beta)$ belonging to $\Omega_1$. Therefore, our next step is to identify the optimal $\chi_\beta\in[1,\hat{a}]$ so that $(\chi_\alpha,\chi_\beta) = (1/\chi_\beta,\chi_\beta)$ minimizes $\ell_\gamma(\chi_\alpha,\chi_\beta)$ in~\eqref{eq:omega-one-objective}.
Substituting this value into~\eqref{eq:omega-one-objective} and using~\eqref{eq:T}, \eqref{eq:U}, and~\eqref{eq:omega-one-h-definition} gives, for every $\gamma\ge0$ and $\chi_\beta\in[1,\hat a]$,
\begin{align}\label{eq:ell_gamma}
\ell_\gamma\left(\frac1{\chi_\beta},\chi_\beta\right)=\gamma T(\chi_\beta)+U(\chi_\beta)=h_\gamma(\chi_\beta),
\end{align}
directly implying that
\begin{align}
\min_{(\chi_\alpha,\chi_\beta)\in\Omega_1}\ell_{\gamma_\star}(\chi_\alpha,\chi_\beta)=\min_{\chi_\beta\in[1,\hat a]}h_{\gamma_\star}(\chi_\beta). \label{eq:omega-one-star-outer}
\end{align}
It follows~\eqref{eq:gamma-star} that $h_{\gamma_\star}(1)=h_{\gamma_\star}(\hat a)$. By the statement of Lemma~\ref{lem:strict-chord}, we have
\begin{align}
\operatorname*{argmin}_{(\chi_\alpha,\chi_\beta)\in\Omega_1}\ell_{\gamma_\star}(\chi_\alpha,\chi_\beta)&=\left\{(1,1),\left(\frac1{\hat a},\hat a\right)\right\}, \label{eq:omega-one-star-argmin}\\
\text{and} \quad \min_{(\chi_\alpha,\chi_\beta)\in\Omega_1}\ell_{\gamma_\star}(\chi_\alpha,\chi_\beta)&=h_{\gamma_\star}(1)=h_{\gamma_\star}(\hat a)=\gamma_\star T(1)+U(1)=\gamma_\star T(\hat a)+U(\hat a). \label{eq:omega-one-star-min}
\end{align}
By~\eqref{eq:upper-transform-inverse}, the corresponding stepsize pairs to~\eqref{eq:omega-one-star-argmin} are
\begin{align*}
(\alpha,\beta)=(\bar\alpha,\bar\alpha)\qquad\text{and}\qquad(\alpha,\beta)=\left(\frac{2}{1+\eta},\frac{2}{1+2M-\eta}\right). 
\end{align*}
Both stepsize pairs attain the same objective value (see~\eqref{eq:optimal-value}), i.e.,
\begin{align*}
\widehat L_{\gamma_\star}(\bar\alpha,\bar\alpha)=\widehat L_{\gamma_\star}\left(\frac{2}{1+\eta},\frac{2}{1+2M-\eta}\right)=h_{\gamma_\star}(1)=h_{\gamma_\star}(\hat a)=V^\star(\gamma_\star).
\end{align*}

\par We now consider the case of $\gamma_1\le\gamma\le\gamma_\star$. For any fixed $(\chi_\alpha,\chi_\beta)\in\Omega_1$, we define
\begin{align}
d_{\chi_\alpha,\chi_\beta}(\gamma):=\ell_\gamma(\chi_\alpha,\chi_\beta)-h_\gamma(1), \label{eq:omega-one-affine-difference}
\end{align}
which is affine in $\gamma$; see~\eqref{eq:omega-one-objective} and~\eqref{eq:omega-one-h-definition}. Equations~\eqref{eq:omega-one-smallgamma-argmin}--\eqref{eq:omega-one-small-gamma} give $d_{\chi_\alpha,\chi_\beta}(\gamma_1)\ge0$, with equality only at $(\chi_\alpha,\chi_\beta)=(1,1)$, because the minimum in~\eqref{eq:omega-one-small-gamma} at $\gamma=\gamma_1$ equals to $h_{\gamma_1}(1)= \ell_{\gamma_1}(1, 1)$. By~\eqref{eq:omega-one-star-outer} and~\eqref{eq:omega-one-star-min}, we also have $d_{\chi_\alpha,\chi_\beta}(\gamma_\star)\ge0$. Hence, for any $\gamma_1\le\gamma<\gamma_\star$,
\begin{align}
d_{\chi_\alpha,\chi_\beta}(\gamma)=\frac{\gamma_\star-\gamma}{\gamma_\star-\gamma_1}d_{\chi_\alpha,\chi_\beta}(\gamma_1)+\frac{\gamma-\gamma_1}{\gamma_\star-\gamma_1}d_{\chi_\alpha,\chi_\beta}(\gamma_\star)\ge0. \label{eq:omega-one-affine-interpolation}
\end{align}
If $(\chi_\alpha,\chi_\beta)\ne(1,1)$, then $d_{\chi_\alpha,\chi_\beta}(\gamma_1) > 0$ (see~\eqref{eq:omega-one-smallgamma-argmin}), whose coefficient in~\eqref{eq:omega-one-affine-interpolation} is positive when $\gamma_1\leq\gamma<\gamma_\star$, leading to $d_{\chi_\alpha,\chi_\beta}(\gamma)>0$. Because when $(\chi_\alpha,\chi_\beta) = (1,1)$, $d_{\chi_\alpha,\chi_\beta}(\gamma_1) = d_{\chi_\alpha,\chi_\beta}(\gamma_\star) = 0$, then by~\eqref{eq:omega-one-affine-interpolation}, we know for any $\gamma\in[\gamma_1,\gamma_\star)$, $(\chi_\alpha,\chi_\beta) = (1,1)$ is the unique minimizer over $\Omega_1$. Consequently, returning to the original stepsizes through~\eqref{eq:upper-transform-inverse} and~\eqref{eq:transformed-upper-objective} gives
\begin{align}
\operatorname*{argmin}_{(\alpha,\beta)\in[0,1]\times[\bar\alpha,1]}\widehat L_\gamma(\alpha,\beta)
=\{(\bar\alpha,\bar\alpha)\} \ \  \text{and} \ \ 
\min_{(\alpha,\beta)\in[0,1]\times[\bar\alpha,1]}\widehat L_\gamma(\alpha,\beta)
&=h_\gamma(1), \ \ \forall \gamma\in[\gamma_1,\gamma_\star). \label{eq:omega-one-intermediate-solution}
\end{align}
\par Finally, let's suppose $\gamma>\gamma_\star$. The strict inequality in~\eqref{eq:omega-one-decreases-at-star} remains valid with $\gamma_\star$ replaced by any $\gamma>\gamma_\star$, so for every fixed $\chi_\beta\in[1,\hat{a}]$ we again have $\chi_\alpha=1/\chi_\beta$ as the unique solution of $\arg\min_{\chi_\alpha:(\chi_\alpha,\chi_\beta)\in\Omega_1}\ell_\gamma(\chi_\alpha,\chi_\beta)$. Then combining~\eqref{eq:ell_gamma} and Lemma~\ref{lem:strict-chord} results in $(\chi_\alpha,\chi_\beta) = (1/\hat{a},\hat{a})$ as the unique minimizer for $\min_{(\chi_\alpha,\chi_\beta)\in\Omega_1}\ell_\gamma(\chi_\alpha,\chi_\beta)$ when $\gamma > \gamma_\star$. Consequently, returning to the original stepsizes through~\eqref{eq:upper-transform-inverse} and~\eqref{eq:transformed-upper-objective} gives that for any $\gamma > \gamma_\star$,
\begin{align}
\operatorname*{argmin}_{(\alpha,\beta)\in[0,1]\times[\bar\alpha,1]}\widehat L_\gamma(\alpha,\beta)
=\left\{\left(\dfrac{2}{1+\eta},\dfrac{2}{1+2M-\eta}\right)\right\}\ \text{and}\  
\min_{(\alpha,\beta)\in[0,1]\times[\bar\alpha,1]}\widehat L_\gamma(\alpha,\beta)
&=h_\gamma(\hat a). \label{eq:omega-one-large-gamma-solution}
\end{align}  We have therefore obtained the complete solution over $(\alpha,\beta)\in[0,1]\times[\bar\alpha,1]$ by combining~\eqref{eq:omega-one-smallgamma-argmin}--\eqref{eq:omega-one-small-gamma}, \eqref{eq:omega-one-star-argmin}--\eqref{eq:omega-one-star-min}, \eqref{eq:omega-one-intermediate-solution}, and~\eqref{eq:omega-one-large-gamma-solution}:
\begin{align}
\operatorname*{argmin}_{(\alpha,\beta)\in[0,1]\times[\bar\alpha,1]}\widehat L_\gamma(\alpha,\beta)&=
\begin{cases}
\left\{\left(\dfrac{\gamma}{1+\gamma},\bar\alpha\right)\right\}, & \text{if } 0\le\gamma\le\gamma_1,\\
\{(\bar\alpha,\bar\alpha)\}, &\text{if } \gamma_1\leq\gamma<\gamma_\star,\\
\left\{(\bar\alpha,\bar\alpha),\left(\dfrac{2}{1+\eta},\dfrac{2}{1+2M-\eta}\right)\right\}, &\text{if } \gamma=\gamma_\star,\\
\left\{\left(\dfrac{2}{1+\eta},\dfrac{2}{1+2M-\eta}\right)\right\}, &\text{if } \gamma>\gamma_\star,
\end{cases} \label{eq:upper-half-argmin}\\
\text{and}\quad \min_{(\alpha,\beta)\in[0,1]\times[\bar\alpha,1]}\widehat L_\gamma(\alpha,\beta)&=
\begin{cases}
\dfrac{4}{(M+1)^2}+\left(\dfrac{M-1}{M+1}\right)^2\dfrac{\gamma}{1+\gamma}, &\text{if } 0\le\gamma\le\gamma_1,\\
h_\gamma(1), &\text{if } \gamma_1\le\gamma\le\gamma_\star,\\
h_\gamma(\hat a), &\text{if } \gamma\ge\gamma_\star.
\end{cases} \label{eq:upper-half-minimum}
\end{align}
\par We now compare the minima obtained on $(\alpha,\beta)\in[0,1]\times[0,\bar\alpha]$ and $(\alpha,\beta)\in[0,1]\times[\bar\alpha,1]$. These comparisons determine both the minimum values over $[0,1]^2$ and all of the minimizers.
\par First, let's consider the case of $0\le\gamma\le\gamma_1$. By~\eqref{eq:lower-half-minimum} and~\eqref{eq:upper-half-minimum}, because of
\begin{align*}
\frac{4}{(M+1)^2}+\left(\frac{M-1}{M+1}\right)^2\frac{\gamma}{1+\gamma}-\frac{\gamma}{1+2\gamma}=\frac{(\gamma(M-1)-2\gamma-2)^2}{(1+\gamma)(1+2\gamma)(M+1)^2}>0,
\end{align*}
resulting in the unique minimizer over $(\alpha,\beta)\in[0,1]\times[0,1]$ is
$(\alpha,\beta)=\left(\frac{\gamma}{1+\gamma},\frac{\gamma}{1+2\gamma}\right)$ with the minimum value $\frac{\gamma}{1+2\gamma}$.
\par Next, when $\gamma_1\le\gamma\le\gamma_2$, we again use~\eqref{eq:lower-half-minimum} and~\eqref{eq:upper-half-minimum}, together with
\begin{align*}
h_\gamma(1)-\frac{\gamma(M-1)^2+4}{\gamma(M-1)^2+4+(M+1)^2}=\frac{(\gamma(M-1)^3-2(M-1)^2-4(M-1)-8)^2}{(M+1)^4(\gamma(M-1)^2+(M-1)^2+4(M-1)+8)}\ge0,
\end{align*}
which equals zero exactly at $\gamma=\gamma_2$. Therefore, the unique minimizer over $(\alpha,\beta)\in[0,1]\times[0,1]$ is $(\alpha,\beta)=\left(\bar\alpha,\frac{\gamma(M-1)^2+4}{\gamma(M-1)^2+4+(M+1)^2}\right)$, with minimum value $\frac{\gamma(M-1)^2+4}{\gamma(M-1)^2+4+(M+1)^2}$. At $\gamma=\gamma_2$, the minimum values over $(\alpha,\beta)\in[0,1]\times[0,\bar\alpha]$ and $(\alpha,\beta)\in[0,1]\times[\bar\alpha,1]$ agree, and their minimizers coincide at the common point $(\bar\alpha,\bar\alpha)$, which remains the unique minimizer. The common minimum can also be expressed as $h_{\gamma_2}(1)$.
\par When $\gamma_2\leq\gamma<\gamma_\star$,~\eqref{eq:lower-half-minimum} and~\eqref{eq:upper-half-minimum} show that the minimum value over $(\alpha,\beta)\in[0,1]\times[0,1]$ is $h_\gamma(1)$, with the unique minimizer $(\bar\alpha,\bar\alpha)$.
\par When $\gamma=\gamma_\star$, equations~\eqref{eq:lower-half-minimum} and~\eqref{eq:upper-half-minimum} show that $h_\gamma(1)$ remains the minimum value over $(\alpha,\beta)\in[0,1]\times[0,1]$. However, two distinct minimizers attain this value: $(\alpha,\beta) = (\bar\alpha,\bar\alpha)$ and $(\alpha,\beta) = \left(\frac{2}{1+\eta},\frac{2}{1+2M-\eta}\right)$.
\par Finally, when $\gamma>\gamma_\star$, we have
\begin{align*}
h_\gamma(1)-h_\gamma(\hat a)=(\gamma-\gamma_\star)\bigl(T(1)-T(\hat a)\bigr)>0.
\end{align*}
The inequality follows from $\gamma > \gamma_\star$ and $T(1)>T(\hat{a})$ by~\eqref{eq:T_decreasing}. Comparing~\eqref{eq:lower-half-minimum} and~\eqref{eq:upper-half-minimum} therefore shows that the minimum value is $h_{\gamma}(\hat{a}) = \gamma T(\hat{a}) + U(\hat{a})$, with the unique minimizer $(\alpha,\beta)=\left(\frac{2}{1+\eta},\frac{2}{1+2M-\eta}\right)$.
Combining all these cases proves
\begin{align}
\min_{(\alpha,\beta)\in[0,1]\times[0,1]}\widehat L_\gamma(\alpha,\beta)&=V^\star(\gamma), \label{eq:unit-square-minimum} \\
\text{and}\quad \operatorname*{argmin}_{(\alpha,\beta)\in[0,1]^2}\widehat L_\gamma(\alpha,\beta)&=
\begin{cases}
\{(\alpha^\star,\beta^\star)\} &\text{if } \gamma\ne\gamma_\star,\\
\left\{(\alpha^\star,\beta^\star),(\bar\alpha,\bar\alpha)\right\} & \text{if }\gamma=\gamma_\star,
\end{cases}, \label{eq:unit-square-argmin}
\end{align}
where $V^\star(\gamma)$ is defined in~\eqref{eq:optimal-value} and $(\alpha^\star,\beta^\star)$ is defined in~\eqref{eq:optimal-pair}.
\par It remains to show that
\begin{align*}
\operatorname*{argmin}_{(\alpha,\beta)\in\R_{\geq 0}\times\R_{\geq 0}}\widehat L_\gamma(\alpha,\beta) \subset [0,1]\times[0,1].
\end{align*}
We prove this by showing that, whenever $\alpha>1$ or $\beta>1$, truncating the corresponding stepsize to one strictly decreases $\widehat L_\gamma$.
\par First, suppose that $\alpha>1$. Fix $\beta\ge0$ and replace $\alpha$ by one. Equations~\eqref{eq:r12}--\eqref{eq:r34} give
\begin{equation}\label{eq:alpha-truncation-factors}
\begin{aligned}
r_1(1,\beta)&=0\le r_1(\alpha,\beta),\quad
r_2(1,\beta)=(M-1)|M\beta-1|\le(M\alpha-1)|M\beta-1|=r_2(\alpha,\beta), \\
r_3(1,\beta)&=(M-1)|1-\beta|\le(M\alpha-1)|1-\beta|=r_3(\alpha,\beta), \quad \text{and} \quad
r_4(1,\beta)=0.
\end{aligned}
\end{equation}
Because $r_4$ is not included in $\widehat\rho(\alpha,\beta)$ when $\alpha>1$ by~\eqref{eq:rho_hat},~\eqref{eq:alpha-truncation-factors} guarantees $\widehat\rho(1,\beta)\le\widehat\rho(\alpha,\beta)$. Moreover, since $\max\{(1-\beta)^2,(1-M\beta)^2\}>0$,
\begin{align}
\widehat L_\gamma(\alpha,\beta)-\widehat L_\gamma(1,\beta)=\gamma\bigl(\widehat\rho(\alpha,\beta)^2-\widehat\rho(1,\beta)^2\bigr) +(\alpha^2-1)\max\{(1-\beta)^2,(1-M\beta)^2\}>0. \label{eq:alpha-truncation}
\end{align}
Thus, whenever $\alpha>1$, replacing it by one strictly decreases $\widehat L_\gamma$.
\par Next, suppose that $\beta>1$. Since $M>1$, we have $M\beta-1>M-1>0$. Fix $\alpha\ge0$ and replace $\beta$ by one. Equations~\eqref{eq:r12}--\eqref{eq:r34} give
\begin{align*}
r_1(\alpha,1)=0\le r_1(\alpha,\beta), \quad
&r_2(\alpha,1)=|M\alpha-1|(M-1)\le|M\alpha-1|(M\beta-1)=r_2(\alpha,\beta), \\
\text{and} \quad
&r_3(\alpha,1)=0\le r_3(\alpha,\beta).
\end{align*}
If $0\le\alpha\le1$, then $r_4$ is also included and satisfies
\begin{align*}
r_4(\alpha,1)=\frac{|1-\alpha|(M-1)}{1+\alpha(M-1)}\le\frac{|1-\alpha|(M\beta-1)}{1+\alpha(M-1)}=r_4(\alpha,\beta).
\end{align*}
If $\alpha>1$, then $r_4$ is excluded from the definition of $\widehat\rho$ by~\eqref{eq:rho_hat}. Hence, in either case, $\widehat\rho(\alpha,1)\le\widehat\rho(\alpha,\beta)$ for any $\alpha \geq 0$ and $\beta > 1$. In addition, since $\beta>1$,
\begin{align*}
\max\{(1-\beta)^2,(1-M\beta)^2\}=(M\beta-1)^2>(M-1)^2=\max\{(1-1)^2,(1-M)^2\}.
\end{align*}
Therefore,
\begin{align}
\widehat L_\gamma(\alpha,\beta)-\widehat L_\gamma(\alpha,1)=\gamma\bigl(\widehat\rho(\alpha,\beta)^2-\widehat\rho(\alpha,1)^2\bigr)+\alpha^2\bigl((M\beta-1)^2-(M-1)^2\bigr)+(\beta^2-1)>0. \label{eq:beta-truncation}
\end{align}
Thus, whenever $\beta>1$, replacing it by one strictly decreases $\widehat L_\gamma$.
Combining the two cases shows that $\operatorname*{argmin}_{(\alpha,\beta)\in\R_{\geq 0}\times\R_{\geq 0}}\widehat L_\gamma(\alpha,\beta) \subset [0,1]\times[0,1]$, which completes the proof.
\end{proof}

\end{document}